\newtheorem{theorem}{Theorem}[section]
\theoremstyle{definition}
\theoremstyle{remark}
\newtheorem{remark}[theorem]{Remark}
\numberwithin{equation}{section}
\begin{document}


\title[Local and 2-local automorphisms of nilpotent associative algebras]{Local and 2-local automorphisms of finite-dimensional nilpotent associative algebras}

	\author{F.N.Arzikulov}
\address[F.N.Arzikulov]	{Institute of Mathematics Uzbekistan Academy of Sciences, 4, University street, Olmazor, Tashkent, 100174, Uzbekistan
\newline
		and\newline
	Department of Mathematics, Andijan State University, 129, Universitet Street, Andijan, 170100, Uzbekistan}
	\email{arzikulovfn@rambler.ru}
	
	\author{I.A.Karimjanov}
\address[Iqboljon Karimjanov]{Saint-Petersburg State University, Saint-Petersburg, Russia,
\newline and\newline
Department of Mathematics, Andijan State University, 129, Universitet Street, Andijan,
170100, Uzbekistan,
\newline
		and\newline
		V.I.Romanovskiy Institute of Mathematics, Uzbekistan Academy of Sciences, Univesity Street, 9, Olmazor district, Tashkent, 100174, Uzbekistan}
	\email{iqboli@gmail.com}
	
	\author{S.M.Umrzaqov}
	\address[S.M.Umrzaqov]{	
		Department of Mathematics, Andijan State University, 129, Universitet Street, Andijan,
170100, Uzbekistan }
	\email{sardor.umrzaqov1986@gmail.com}

\subjclass[2020]{16S50, 16W20}
\keywords{Associative algebras, nilpotent algebras, null-filiform, filiform, automorphisms, local automorphisms, 2-local automorphisms}
\thanks{This work was supported by RSF 22-71-10001}	
	\maketitle
	\maketitle
\maketitle

\begin{abstract}
In the present paper automorphisms, local and 2-local automorphisms of $n$-dimensional null-filiform and filiform   associative algebras are studied. Namely, a common form of the matrix of automorphisms and local automorphisms of these algebras is clarified. It turns out that the common form of the matrix of an automorphism on these algebras does not coincide with the local automorphism's matrices common form on these algebras.
Therefore, these associative algebras have local automorphisms that are not automorphisms.
Also, that each 2-local automorphism of these algebras is an automorphism is proved.
\end{abstract}

\medskip

\section*{Introduction}

The Gleason-Kahane-\.{Z}elazko theorem \cite{AMG,JPK_WZ}, which is a fundamental contribution to the theory of Banach algebras,
asserts that every unital linear local homomorphism from an unital complex Banach algebra $A$ into ${\Bbb C}$ is
multiplicative. We recall that a linear map $T$ from a Banach algebra $A$ into a Banach algebra $B$ is said to
be a local homomorphism if for every $a$ in $A$ there exists a homomorphism $\Phi_a : A\to B$, depending on $a$,
such that $T(a)=\Phi_a(a)$.

Later, in \cite{Kad}, R. Kadison introduces the concept of local derivations and proves
that each continuous local derivation from a von Neumann algebra into its dual Banach
bemodule is a derivation. B. Jonson \cite{Jon} extends the above result by proving that every
local derivation from a C*-algebra into its Banach bimodule is a derivation. In particular, Johnson
gives an automatic continuity result by proving that local derivations of a C*-algebra $A$ into a
Banach $A$-bimodule $X$ are continuous even if not assumed a priori to be so
(cf. \cite[Theorem 7.5]{Jon}). Based on these results, many authors have studied
local derivations on operator algebras.

A similar notion, which characterizes non-linear generalizations of automorphisms, was introduced
by  \v{S}emrl in \cite{S} as $2$-local automorphisms.
He described such maps on the algebra $B(H)$ of all bounded linear operators on an infinite dimensional separable Hilbert space $H$.

The first results concerning local derivations and automorphisms on finite-dimensional Lie algebras were obtained in \cite{AK}. Namely, in \cite{AK} the authors have proved that every local derivation on semi-simple Lie algebras is a derivation and gave examples of nilpotent finite-dimensional Lie algebras with local derivations which are not derivations.
Sh.A.Ayupov, K.K.Kudaybergenov, B.A.Omirov proved similar results concerning local derivations and automorphisms on simple Leibniz algebras in their recent paper \cite{AKO}. Local automorphisms of certain finite-dimensional simple Lie and Leibniz algebras are investigated in \cite{AKConfe}. Concerning local automorphism, T.Becker, J.Escobar, C.Salas, and R.Turdibaev in \cite{BEST} established that the set of local automorphisms $LAut(sl_2)$  coincides with the group $Aut^{\pm}(sl_2)$ of all automorphisms and anti-automorphisms. Later in \cite{Costantini}  M.Costantini proved that a linear map on a simple Lie algebra is a local automorphism if and only if it is either an automorphism or an anti-automorphism. The local derivation of semisimple Leibniz algebras investigated on \cite{KKY}. Similar results concerning local derivations and automorphisms on Lie superalgebras were obtained in \cite{ChWD,WCN1} and \cite{WCN2}.

In the paper \cite{Ayupov6}, local derivations of solvable Lie algebras are studied, and it is proved that in the class of solvable Lie algebras, there exist algebras that admit local derivations which are not derivation. Also, algebras, every local derivation of which is a derivation, are found. Moreover, every local derivation on a finite-dimensional solvable Lie algebra with model nilradical and the maximal dimension of complementary space is a derivation. Sh.A.Ayupov, A.Kh.Khudoyberdiyev, and B.B.Yusupov proved similar results concerning local derivations on solvable Leibniz algebras in their recent papers \cite{AyuKudYus, Yus}. F.N.Arzikulov, I.A.Karimjanov, and S.M.Umrzaqov established that every local and 2-local automorphisms on the solvable Leibniz algebras with null-filiform and naturally graded non-Lie filiform nilradicals, whose dimension of complementary space is maximal, is an automorphism \cite{AKU}. Recently, local derivations and automorphisms of Cayley algebras,  local derivations on the simple Malcev algebra  and local and 2-local derivations of simple $n-$ary algebras considered in \cite{AK33, FKK,KKY}

In the paper \cite{KUY}, I.A.Karimjanov, S.M.Umrzaqov, and B.Yusupov describe automorphisms, local and 2-local automorphisms of solvable Leibniz algebras with a model or abelian null-radicals. They show that any local automorphisms on solvable Leibniz algebras with a model nilradical, the dimension of the complementary space of which is maximal, is an automorphism.
But solvable Leibniz algebras with an abelian nilradical with a  $1$-dimensional complementary space admit local automorphisms which are not automorphisms.

In the present paper automorphisms, local and 2-local automorphisms of $n$-dimensional filiform and null-filiform associative algebras are studied. Namely, a common form of the matrix of automorphisms and local automorphisms of these algebras is clarified. It turns out that the common form of the matrix of an automorphism on these algebras does not coincide with the local automorphism's matrix's common form on these algebras.
Therefore, these associative algebras have local automorphisms that are not automorphisms.
Also, that each 2-local automorphism of these algebras is an automorphism is proved.

\section{Preliminaries}\label{www}

{\bf Null-filiform and filiform associative algebras.} For an algebra $\mathcal{A}$ of an arbitrary variety, we consider the series
\[\mathcal{A}^1,\quad \quad     \mathcal{A}^{i+1}=\sum\limits_{k=1}^i\mathcal{A}^k
\mathcal{A}^{i-k+1},  \quad \quad i\geq1.\]

We say that an algebra $\mathcal{A}$ is nilpotent if $\mathcal{A}^i=0$ for some $i\in \mathbb{N}$. The smallest integer satisfying $\mathcal{A}^i=0$ is called the index of nilpotency or nilindex of $\mathcal{A}$.

{\bf Definition.} An $n-$dimensional algebra $\mathcal{A}$ is called null-filiform $dim \mathcal{A}^i=(n+1)-i, 1\leq i\leq n+1$.

\begin{theorem} [{\bf \cite{MasOmi}}] \label{2.1}
An arbitrary $n$-dimensional null-filiform associative algebra is isomorphic to the following algebra:
\[
\mu_0: \quad  e_ie_j=e_{i+j}, 2\leq i+j \leq n,
\]
where $\{e_1, e_2, \dots , e_n\}$ is a basis of the algebra $\mathcal{A}$ and the omitted products vanish.
\end{theorem}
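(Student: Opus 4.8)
The plan is to exploit the extreme rigidity of the dimension sequence. The hypothesis $\dim \mathcal{A}^i = (n+1)-i$ forces the descending series to be a strictly decreasing flag
\[
\mathcal{A} = \mathcal{A}^1 \supset \mathcal{A}^2 \supset \cdots \supset \mathcal{A}^n \supset \mathcal{A}^{n+1} = 0,
\]
in which every successive quotient $\mathcal{A}^i/\mathcal{A}^{i+1}$ is one-dimensional. First I would record that, because $\mathcal{A}$ is associative, the series defined in the preliminaries coincides with the ordinary powers: a straightforward induction using associativity shows that $\mathcal{A}^i$ is the linear span of all products of $i$ elements, so that $\mathcal{A}^{i+1} = \mathcal{A}\,\mathcal{A}^i = \mathcal{A}^i\,\mathcal{A}$. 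This identity is what allows the whole algebra to be generated by a single element.

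Next I would choose a generator. Since $\dim \mathcal{A}/\mathcal{A}^2 = 1$, fix any $e_1 \in \mathcal{A}\setminus\mathcal{A}^2$, so that $\mathcal{A} = \langle e_1\rangle + \mathcal{A}^2$. The key step is the claim that $\mathcal{A}^i = \langle e_1^i\rangle + \mathcal{A}^{i+1}$ for every $i$, which I would prove by induction on $i$. The base case is the choice of $e_1$; for the inductive step I would compute
\[
\mathcal{A}^{i+1} = \mathcal{A}\,\mathcal{A}^i = (\langle e_1\rangle + \mathcal{A}^2)(\langle e_1^i\rangle + \mathcal{A}^{i+1}),
\]
and observe that every term except $e_1\cdot e_1^i = e_1^{i+1}$ lands in $\mathcal{A}^{i+2}$. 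Combined with $\dim \mathcal{A}^i/\mathcal{A}^{i+1} = 1$, this shows $e_1^i \notin \mathcal{A}^{i+1}$ (otherwise $\mathcal{A}^i = \mathcal{A}^{i+1}$, contradicting the strict dimension drop), and hence $e_1^i$ spans $\mathcal{A}^i$ modulo $\mathcal{A}^{i+1}$.

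Setting $e_i := e_1^i$ for $1 \le i \le n$, the relations $e_i \in \mathcal{A}^i\setminus\mathcal{A}^{i+1}$ make $\{e_1,\dots,e_n\}$ linearly independent: a dependence $\sum_i c_i e_i = 0$ with smallest nonzero coefficient $c_j$ would force $e_j \in \mathcal{A}^{j+1}$, a contradiction. As $\dim \mathcal{A} = n$, these elements form a basis. The multiplication table is then immediate from associativity: $e_ie_j = e_1^{i}e_1^{j} = e_1^{i+j} = e_{i+j}$ when $i+j \le n$, while for $i+j > n$ one has $e_1^{i+j}\in \mathcal{A}^{i+j} \subseteq \mathcal{A}^{n+1} = 0$. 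This is exactly the table of $\mu_0$, yielding the desired isomorphism.

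The genuinely substantive point is the inductive claim that a single element $e_1$ drawn from outside $\mathcal{A}^2$ generates the entire algebra and that its powers remain nonzero all the way up to degree $n$; everything else is bookkeeping with the flag. The only places where associativity is truly essential are in identifying the abstract series with ordinary powers so that $\mathcal{A}^{i+1} = \mathcal{A}\,\mathcal{A}^i$, and in reading off $e_1^i e_1^j = e_1^{i+j}$ without parenthesization ambiguity. I expect the main care to be needed in confirming that no power $e_1^i$ with $i \le n$ vanishes prematurely, which is precisely what the hypothesis $\dim \mathcal{A}^i = (n+1)-i$ guarantees.
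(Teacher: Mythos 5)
Your argument is correct. The paper does not prove this statement at all --- it is imported verbatim from the cited reference of Masutova and Omirov --- so there is no in-text proof to compare against; your route (identify the recursively defined series with the ordinary associative powers so that $\mathcal{A}^{i+1}=\mathcal{A}\,\mathcal{A}^{i}$, pick $e_1\in\mathcal{A}\setminus\mathcal{A}^2$, prove inductively that $\mathcal{A}^{i}=\langle e_1^{i}\rangle+\mathcal{A}^{i+1}$ using the one-dimensional successive quotients, and read off the table of $\mu_0$ from the resulting basis $e_i=e_1^{i}$) is the standard argument for this classification and is essentially the one given in that reference.
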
	

{\bf Definition.} An $n$-dimensional algebra $\mathcal{A}$ is called filiform if $dim(\mathcal{A}^i)=n-i$, $2\leq i\leq n$.

\begin{theorem} [{\bf \cite{{KL}}}]  \label{2.2}
For $n>3$ every $n -$ dimensional filiform associative algebra over an algebraically closed field $\mathbb{F}$ of characteristic zero is isomorphic to one of the following pairwise non - isomorphic algebras with a basis $\{e_1, e_2, \dots , e_n\}$:
\[\mu_{1,1}:\quad   e_ie_j=e_{i+j},\]
\[\mu_{1,2}: \quad  e_ie_j=e_{i+j},e_ne_n=e_{n-1},\]
\[\mu_{1,3}:  \quad e_ie_j=e_{i+j},e_1e_n=e_{n-1},\]
\[\mu_{1,4}: \quad  e_ie_j=e_{i+j},e_1e_n=e_ne_n=e_{n-1}\]
where $2\leq i+j\leq n-1$.
\end{theorem}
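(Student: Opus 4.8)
The plan is to exploit the filtration $\mathcal{A} \supseteq \mathcal{A}^2 \supseteq \cdots \supseteq \mathcal{A}^{n-1} \supseteq \mathcal{A}^n = 0$, whose successive quotients have dimensions $\dim(\mathcal{A}/\mathcal{A}^2) = 2$ and $\dim(\mathcal{A}^i/\mathcal{A}^{i+1}) = 1$ for $2 \le i \le n-1$. In particular $\mathcal{A}$ has nilindex $n$ and is generated by two elements modulo $\mathcal{A}^2$. First I would produce a basis adapted to this filtration and built around a single \emph{principal} generator, and then reduce all remaining structure constants by repeated changes of basis.

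The first key step is to find a generator $e_1 \in \mathcal{A}\setminus \mathcal{A}^2$ with $e_1^2 \notin \mathcal{A}^3$. Such an $e_1$ exists: if the quadratic map $v \mapsto v^2$ sent all of $\mathcal{A}$ into $\mathcal{A}^3$, then polarization (here I use $\mathrm{char}\,\mathbb{F} = 0$) would give $vw + wv \in \mathcal{A}^3$ for all $v,w$, and a short computation in the associated graded algebra then shows that its degree-three part vanishes, i.e. $\mathcal{A}^3 = \mathcal{A}^4$, whence $\mathcal{A}^3 = 0$ by nilpotency, contradicting $\dim \mathcal{A}^3 = n - 3 \ge 1$ (this is where $n > 3$ enters). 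Having fixed $e_1$, I would show that $e_1, e_1^2, \dots, e_1^{n-1}$ are linearly independent, so that, setting $e_i := e_1^i$, each $e_i$ spans $\mathcal{A}^i/\mathcal{A}^{i+1}$, and then pick any $e_n \in \mathcal{A}\setminus\mathcal{A}^2$ completing $\{e_1, e_n\}$ to a generating pair. This immediately yields $e_i e_j = e_{i+j}$ for $2 \le i+j \le n-1$ and $e_i e_j = 0$ once $i + j \ge n$, so that the only products still to be determined are $e_1 e_n$, $e_n e_1$ and $e_n^2$.

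Associativity forces $e_i e_n = e_1^{i-1}(e_1 e_n)$ and $e_n e_i = (e_n e_1)e_1^{i-1}$, so everything is governed by $e_1 e_n, e_n e_1, e_n^2 \in \mathcal{A}^2$, and I would reduce these into the socle $\mathcal{A}^{n-1} = \langle e_{n-1}\rangle$. Replacing $e_n$ by $e_n - z$ with $z \in \mathcal{A}^2$ shifts $e_1 e_n$ and $e_n e_1$ by the \emph{same} element $e_1 z = z e_1$, while replacing $e_n$ by $e_n - \gamma e_1$ removes the degree-two component $\gamma e_2$; iterating these moves and invoking the associativity relations strips off all intermediate components, leaving $e_1 e_n = a\,e_{n-1}$, $e_n e_1 = b\,e_{n-1}$, $e_n^2 = c\,e_{n-1}$ and $e_i e_n = e_n e_i = 0$ for $i \ge 2$. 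The class of the commutator $e_1 e_n - e_n e_1$ is invariant under the $z$-shifts, whereas the shift $e_n \mapsto e_n - t\,e_{n-2}$ sends $(a,b)$ to $(a-t,\,b-t)$ and fixes $c$; choosing $t = b$ normalizes $b = 0$. Finally the rescalings $e_1 \mapsto \lambda e_1$, $e_n \mapsto \mu e_n$ act on the surviving pair by $a \mapsto \mu\lambda^{2-n}a$ and $c \mapsto \mu^2\lambda^{1-n}c$; since $\mathbb{F}$ is algebraically closed and $n > 3$, any nonzero value can be scaled to $1$. The four possibilities $(a,c) \in \{(0,0),(0,1),(1,0),(1,1)\}$ then give exactly $\mu_{1,1}, \mu_{1,2}, \mu_{1,3}, \mu_{1,4}$.

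To see that these four are pairwise non-isomorphic I would use basis-free invariants: $\mu_{1,1}$ and $\mu_{1,2}$ are commutative while $\mu_{1,3}$ and $\mu_{1,4}$ are not, their commutator $e_1 e_n - e_n e_1 = e_{n-1}$ being nonzero; and within each pair the existence of an element $x \in \mathcal{A}\setminus\mathcal{A}^2$ with $0 \ne x^2 \in \mathcal{A}^{n-1}$ separates $\mu_{1,2}$ from $\mu_{1,1}$ and $\mu_{1,4}$ from $\mu_{1,3}$. I expect the genuine obstacle to lie in the two structural claims of the middle steps, namely that the powers $e_1,\dots,e_1^{n-1}$ of a single well-chosen generator are linearly independent (so that one principal chain fills the entire filtration), and that associativity together with the admissible base changes really does confine $e_1 e_n$, $e_n e_1$ and $e_n^2$ to the one-dimensional socle; once these are secured, the scaling analysis and the non-isomorphism check are routine.
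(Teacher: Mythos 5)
A preliminary remark: the paper offers no proof of Theorem \ref{2.2} at all --- it is imported verbatim from \cite{KL} as a preliminary --- so your argument can only be judged on its own terms. Your overall route (adapted filtration, a principal generator $e_1$ with $e_i=e_1^i$, reduction of $e_1e_n$, $e_ne_1$, $e_n^2$ into the socle, then scaling) is the standard and correct one; the polarization argument for finding $e_1$ with $e_1^2\notin\mathcal{A}^3$ is sound, and the scaling analysis correctly locates the uses of $n>3$ and algebraic closure. The two structural claims you defer are in fact true and provable along the lines you indicate: once $e_1^2$ spans $\mathcal{A}^2/\mathcal{A}^3$, the identity $e_ne_1^i=(e_ne_1)e_1^{i-1}$ together with $e_ne_1\in\langle e_1^2\rangle+\mathcal{A}^3$ shows inductively that $\mathcal{A}^{i+1}/\mathcal{A}^{i+2}$ is spanned by $e_1^{i+1}$, so the principal chain fills the filtration. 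For the socle reduction, however, you should be explicit that associativity is doing real work: $(e_1e_n)e_1=e_1(e_ne_1)$ forces the coefficients of $e_1e_n$ and $e_ne_1$ to agree below degree $n-1$, and $(e_1e_n)e_n=e_1e_n^2$ forces the $e_2$-coefficients of $e_1e_n$ and $e_n^2$ to be of the form $(\alpha,\alpha^2)$; only because of this can the single substitution $e_n\mapsto e_n-\alpha e_1$, with its one free parameter, clear the degree-two components of all three products simultaneously. Without deriving these identities the reduction as stated does not close.

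The step that genuinely fails is the non-isomorphism check. The invariant ``there exists $x\in\mathcal{A}\setminus\mathcal{A}^2$ with $0\ne x^2\in\mathcal{A}^{n-1}$'' does not separate $\mu_{1,2}$ from $\mu_{1,1}$ (nor $\mu_{1,4}$ from $\mu_{1,3}$) when $n$ is odd: writing $n=2m+1$ with $m\ge 2$, the element $x=e_n+e_m\notin\mathcal{A}^2$ of $\mu_{1,1}$ satisfies $x^2=e_m^2=e_{2m}=e_{n-1}\ne 0$, since every product involving $e_n$ vanishes there. So for odd $n>3$ your argument leaves the pairwise non-isomorphism unproved within each commutativity class. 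A correct replacement is to keep commutativity to split $\{\mu_{1,1},\mu_{1,2}\}$ from $\{\mu_{1,3},\mu_{1,4}\}$ and then compare annihilators, which are insensitive to the lower-degree noise above: the two-sided annihilator $\{x:\ x\mathcal{A}=\mathcal{A}x=0\}$ has dimension $2$ for $\mu_{1,1}$ and $1$ for $\mu_{1,2}$, while the left annihilator $\{x:\ x\mathcal{A}=0\}$ has dimension $2$ for $\mu_{1,3}$ and $1$ for $\mu_{1,4}$.
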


\section{Description of automorphisms of finite-dimensional null-filiform and filiform  associative algebras} \label{S:big}

Here we give a description of automorphisms of the associative algebras from the theorems \ref{2.1} and \ref{2.2}.

\begin{theorem}  \label{3.1}
A linear map $\varphi:\mu_0\to \mu_0 $ is an automorphism of the algebra $\mu_0$ if and only if the map $\varphi$ has the
following form:
\[
\varphi(e_1)=\sum\limits_{i=1}^na_ie_i,
\]
\[
\varphi(e_i)=\sum\limits_{j=i}^n (\sum\limits_{k_1 +k_2 +... +k_i=j} a_{k_1}\cdot a_{k_2}\cdot ... \cdot a_{k_i})e_j,
\quad   2\leq i \leq n, \quad (1)
\]
where $a_1\neq0$.
\end{theorem}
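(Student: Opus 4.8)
The plan is to exploit the fact that $\mu_0$ is generated as an algebra by the single element $e_1$: the defining relations $e_ie_j=e_{i+j}$ give $e_i=e_1^i$ for all $1\le i\le n$, together with $e_1^{n+1}=0$. In other words $\mu_0$ is (the augmentation ideal of) the truncated polynomial algebra, so any algebra homomorphism $\varphi$ is completely determined by the image $u:=\varphi(e_1)$ via $\varphi(e_i)=\varphi(e_1^i)=u^i$. This single observation reduces the whole statement to a multinomial expansion together with a determinant computation.

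For the forward implication, I would suppose $\varphi$ is an automorphism, write $u=\varphi(e_1)=\sum_{i=1}^n a_ie_i$, and use $e_k=e_1^k$ to expand
\[
\varphi(e_i)=u^i=\Big(\sum_{k=1}^n a_k e_1^{k}\Big)^i=\sum_{k_1,\dots,k_i\ge 1} a_{k_1}\cdots a_{k_i}\,e_{k_1+\cdots+k_i}.
\]
Collecting terms by the total degree $j=k_1+\cdots+k_i$ and discarding those with $j>n$ (for which $e_j=0$), and noting that each $k_\ell\ge 1$ forces $j\ge i$, yields exactly formula (1). To get $a_1\ne 0$, I would observe that in the basis $\{e_1,\dots,e_n\}$ the matrix of $\varphi$ is triangular: the coefficient of $e_j$ in $\varphi(e_i)$ vanishes for $j<i$, while the diagonal term ($j=i$, which forces all $k_\ell=1$) equals $a_1^i$. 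Hence $\det\varphi=a_1^{1+2+\cdots+n}=a_1^{n(n+1)/2}$, so invertibility of $\varphi$ is equivalent to $a_1\ne 0$; equivalently, $\varphi$ induces an automorphism of the one-dimensional space $\mathcal{A}/\mathcal{A}^2$ that multiplies $e_1$ by $a_1$.

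For the converse, let $\varphi$ be the linear map given by formula (1) with $a_1\ne 0$, and check multiplicativity and bijectivity. Setting $u=\sum_{i=1}^n a_ie_i$, formula (1) says precisely $\varphi(e_i)=u^i$, so for basis elements $e_i,e_j$ I compute $\varphi(e_i)\varphi(e_j)=u^iu^j=u^{i+j}$. If $i+j\le n$ this equals $\varphi(e_{i+j})=\varphi(e_ie_j)$, while if $i+j>n$ then $u^{i+j}=0$, since its lowest-degree term $a_1^{i+j}e_{i+j}$ and all higher terms vanish, matching $\varphi(e_ie_j)=\varphi(0)=0$; thus $\varphi$ is a homomorphism, and bijectivity follows from the same determinant $a_1^{n(n+1)/2}\ne 0$. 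The computations are routine; the only points requiring care are the bookkeeping in the multinomial expansion and the verification that the multiplicativity relation survives in the range $i+j>n$, where both sides collapse to zero.
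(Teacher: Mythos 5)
Your proof is correct and its core computation is the same as the paper's: both exploit $e_i=e_1^i$ to get $\varphi(e_i)=\varphi(e_1)^i$ and then expand the power multinomially. In fact your write-up is more complete than the paper's, which only carries out this forward computation and stops --- it never verifies the converse (that formula (1) with $a_1\neq 0$ actually defines an automorphism, including the collapse of $u^{i+j}$ to zero when $i+j>n$) nor justifies $a_1\neq0$ via the triangular matrix with determinant $a_1^{n(n+1)/2}$; both of these additions of yours are needed for the ``if and only if'' as stated.
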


\begin{proof}
Let
\[
\varphi(e_1)=\sum\limits_{i=1}^na_ie_i.
\]
Then
\[
\varphi(e_2)=\varphi(e_1e_1)=\varphi(e_1)\varphi(e_1)
\]
\[
=\left(\sum\limits_{i=1}^na_ie_i\right)\left(\sum\limits_{i=1}^na_ie_i\right)=\left(\sum\limits_{i=1}^na_ie_i\right)^2
\]
\[
=\sum\limits_{j=2}^n \left(\sum\limits_{k_1 +k_2=j} a_{k_1}\cdot a_{k_2}\right)e_j.
\]
Also we have
\[
\varphi(e_3)=\varphi(e_1e_2)=\varphi(e_1)\varphi(e_2)=\left(\sum\limits_{i=1}^na_ie_i\right)\left(\sum\limits_{i=1}^na_ie_i\right)^2]
\]
\[
=\left(\sum\limits_{i=1}^na_ie_i\right)^3=\sum\limits_{j=3}^n \left(\sum\limits_{k_1 +k_2+k_3=j} a_{k_1}\cdot a_{k_2}\cdot a_{k_3}\right)e_j.
\]
Similarly, for any $i=2,3,\dots n$, we have
\[
\varphi(e_i)=\varphi(e_1e_{i-1})=\varphi(e_1)\varphi(e_{i-1})=\left(\sum\limits_{i=1}^na_ie_i\right)\left(\sum\limits_{i=1}^na_ie_i\right)^{i-1}
\]
\[
=\left(\sum\limits_{i=1}^na_ie_i\right)^i=\sum\limits_{j=i}^n \left(\sum\limits_{k_1 +k_2 +... +k_i=j} a_{k_1}\cdot a_{k_2}\cdot ... \cdot a_{k_i}\right)e_j.
\]
The proof is complete.
\end{proof}

\begin{theorem}  \label{3.2}
A linear map $\varphi:\mu_{1,1} \to \mu_{1,1} $ is an automorphism if and only if the map $\varphi$ has the
following form:
\[\varphi(e_1)=\sum\limits_{i=1}^na_ie_i,\]
\[\varphi(e_i)=\sum\limits_{j=i}^{n-1} (\sum\limits_{k_1 +k_2 +... +k_i=j} a_{k_1}\cdot a_{k_2}\cdot ... \cdot a_{k_i})e_j, \quad   2\leq i \leq n-1 \]
\[\varphi(e_n)=b_{n-1}e_{n-1}+b_ne_n\]
where $a_1\neq0$.
\end{theorem}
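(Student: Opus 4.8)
The plan is to treat the generators $e_1,\dots,e_{n-1}$ exactly as in the proof of Theorem \ref{3.1}, and then to handle $e_n$ separately, the point being that in $\mu_{1,1}$ the element $e_n$ is no longer a power of $e_1$. First I would record the two structural facts that drive everything. Since $e_ie_j=e_{i+j}$ holds only for $2\le i+j\le n-1$ and all other products vanish, an easy induction gives $e_1^{\,k}=e_k$ for $2\le k\le n-1$, whereas $e_1^{\,n}=e_1e_{n-1}=0$ because $1+(n-1)=n>n-1$; thus $e_n$ is genuinely not a power of $e_1$. Writing $\varphi(e_1)=\sum_{i=1}^n a_ie_i$ and using multiplicativity, for $2\le i\le n-1$ one gets $\varphi(e_i)=\varphi(e_1^{\,i})=\varphi(e_1)^i$. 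Expanding the power and using that $e_{k_1}\cdots e_{k_i}=e_{k_1+\cdots+k_i}$ when $k_1+\cdots+k_i\le n-1$ and $0$ otherwise (in particular every monomial containing $e_n$ dies, so $a_n$ never contributes here), I obtain
\[
\varphi(e_i)=\sum_{j=i}^{n-1}\Bigl(\sum_{k_1+\cdots+k_i=j}a_{k_1}\cdots a_{k_i}\Bigr)e_j,\qquad 2\le i\le n-1,
\]
which is exactly the asserted formula, the upper summation index now being $n-1$ rather than $n$.

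Next I would pin down $\varphi(e_n)$ using invariance of a canonical subspace. A direct computation of the annihilator shows $\mathrm{Ann}(\mathcal{A})=\{x:\ x\mathcal{A}=\mathcal{A}x=0\}=\langle e_{n-1},e_n\rangle$, since $e_{n-1}$ and $e_n$ kill every $e_k$ (their index plus any $k\ge1$ exceeds $n-1$), while no other basis vector does. Every automorphism preserves $\mathrm{Ann}(\mathcal{A})$, so $\varphi(e_n)=b_{n-1}e_{n-1}+b_ne_n$, which is the remaining line of the claimed form. This is the step that has no analogue in Theorem \ref{3.1}: there $e_n=e_1^{\,n}\ne0$ is forced by $\varphi(e_1)$, whereas here $\varphi(e_n)$ carries two free parameters.

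For the converse I would take a map $\varphi$ of the stated form and check it is an algebra endomorphism by verifying $\varphi(e_ie_j)=\varphi(e_i)\varphi(e_j)$ on all basis pairs. For $i,j\le n-1$ this reduces to $\varphi(e_1)^i\varphi(e_1)^j=\varphi(e_1)^{i+j}$, which matches $\varphi(e_{i+j})$ when $i+j\le n-1$ and equals $0=\varphi(0)$ when $i+j\ge n$ because $\varphi(e_1)^{n}=0$ (every monomial has index sum $\ge n$); every product in which $e_{n-1}$ or $e_n$ appears vanishes on both sides since $\varphi(e_{n-1}),\varphi(e_n)\in\mathrm{Ann}(\mathcal{A})$. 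Finally, in the basis $e_1,\dots,e_n$ the matrix of $\varphi$ has a lower-triangular leading $(n-1)\times(n-1)$ block with diagonal $a_1,a_1^2,\dots,a_1^{\,n-1}$, and a short cofactor expansion along the last column (whose only entries are $b_{n-1}$ and $b_n$) leaves $\det\varphi=b_n\,a_1^{\,n(n-1)/2}$. Hence $\varphi$ is bijective exactly when $a_1\ne0$ (and, as the determinant shows, $b_n\ne0$ for bijectivity), which together with the above proves the equivalence.

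The main obstacle I anticipate is precisely the bookkeeping around $e_n$: one must resist the temptation to set $\varphi(e_n)=\varphi(e_1)^n$ as in the null-filiform case, recognise that $\varphi(e_n)$ is unconstrained except for lying in $\mathrm{Ann}(\mathcal{A})$, and keep careful track of which monomials survive the cutoff $k_1+\cdots+k_i\le n-1$. Everything else is the same multinomial expansion already used for $\mu_0$.
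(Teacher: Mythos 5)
Your proposal is correct and, for the ``only if'' direction, follows essentially the same route as the paper: expand $\varphi(e_i)=\varphi(e_1)^i$ for $2\le i\le n-1$ and note that every monomial with index sum $\ge n$ (in particular every one containing $a_n$) dies. The one stylistic difference is how you constrain $\varphi(e_n)$: the paper computes $0=\varphi(e_1e_n)=\varphi(e_1)\varphi(e_n)$ explicitly and peels off $b_1=\dots=b_{n-2}=0$ coefficient by coefficient using $a_1\ne0$, whereas you invoke invariance of $\mathrm{Ann}(\mu_{1,1})=\langle e_{n-1},e_n\rangle$; these are equivalent (your annihilator computation is itself just the statement that $xe_1=0$ forces $x_1=\dots=x_{n-2}=0$), but the annihilator phrasing is cleaner and generalizes to the other filiform algebras. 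Where you genuinely go beyond the paper is the converse: the paper's proof stops after deriving the form and never verifies that a map of the stated form is multiplicative and bijective, while you check multiplicativity on all basis pairs and compute $\det\varphi=b_n\,a_1^{n(n-1)/2}$. This correctly exposes a small imprecision in the paper: bijectivity requires $b_n\ne0$ in addition to $a_1\ne0$, a condition missing from the theorem statement (the paper's proof asserts ``$b_{n-1}\neq0$, $b_n\neq0$'', of which the first is unnecessary and the second is needed but not stated in the theorem). Your version is the more complete argument.
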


\begin{proof}
Let
\[
\varphi(e_1)=\sum\limits_{i=1}^na_ie_i,
\]
\[
\varphi(e_n)=\sum\limits_{i=1}^nb_ie_i.
\]
Then, by the table of multiplication of the algebra $\mu_{1,1}$ and equalities
\[
\varphi(e_i)=\varphi(e_1e_{i-1}), \quad 2\leq i \leq n-1,
\]
we have
\[
\varphi(e_2)=\varphi(e_1e_1)=\varphi(e_1)\varphi(e_1)=\left(\sum\limits_{i=1}^na_ie_i\right)\left(\sum\limits_{i=1}^na_ie_i\right)
=\left(\sum\limits_{i=1}^na_ie_i\right)^2=
\]
\[
=\sum\limits_{j=2}^{n-1} \left(\sum\limits_{k_1 +k_2=j} a_{k_1}\cdot a_{k_2}\right)e_j,
\]
\[
\varphi(e_3)=\varphi(e_1e_2)=\varphi(e_1)\varphi(e_2)=\left(\sum\limits_{i=1}^na_ie_i\right)\left(\sum\limits_{i=1}^na_ie_i\right)^2=
\]
\[
=\left(\sum\limits_{i=1}^na_ie_i\right)^3=\sum\limits_{j=3}^{n-1} \left(\sum\limits_{k_1 +k_2+k_3=j} a_{k_1}\cdot a_{k_2}\cdot a_{k_3}\right)e_j,
\]
and so on
\[
\varphi(e_i)=\varphi(e_1e_{i-1})=\varphi(e_1)\varphi(e_{i-1})=\left(\sum\limits_{i=1}^na_ie_i\right)\left(\sum\limits_{i=1}^na_ie_i\right)^{i-1}]=
\]
\[
=\left(\sum\limits_{i=1}^na_ie_i\right)^i=\sum\limits_{j=i}^{n-1}
\left(\sum\limits_{k_1 +k_2 +... +k_i=j} a_{k_1}\cdot a_{k_2}\cdot ... \cdot a_{k_i}\right)e_j, \quad   2\leq i \leq n-1.
\]
We also have
\[
0=\varphi(e_1e_n)=\varphi(e_1)\varphi(e_n)=\left(\sum\limits_{i=1}^na_ie_i\right)\left(\sum\limits_{i=1}^nb_ie_i\right)=
\]
\[
=\sum\limits_{j=1}^{n-2} \sum\limits_{i=j+1}^{n-1}a_jb_{i-1}e_i.
\]
From this it follows that
\[
b_i=0, \quad 1\leq i \leq n-2,\quad b_{n-1}\neq 0, b_n \neq 0.
\]
So,
\[
\varphi(e_n)=b_{n-1}e_{n-1}+b_ne_n.
\]
The proof is complete.
\end{proof}

We can similarly prove the following theorems.

\begin{theorem}   \label{3.3}
A linear map $\varphi:\mu_{1,2} \to \mu_{1,2} $ is an automorphism if and only if the map $\varphi$ has the
following form:
\[\varphi(e_1)=\sum\limits_{i=1}^na_ie_i,\]
\[\varphi(e_2)=\sum\limits_{j=2}^{n-1} (\sum\limits_{k_1 +k_2=j} a_{k_1}\cdot a_{k_2})e_j+a^2_ne_{n-1}\]
\[\varphi(e_i)=\sum\limits_{j=i}^{n-1} (\sum\limits_{k_1 +k_2 +... +k_i=j} a_{k_1}\cdot a_{k_2}\cdot ... \cdot a_{k_i})e_j, \quad   3\leq i \leq n-1 \]
\[\varphi(e_n)=- a_n \sqrt{a^{n-3}_1}e_{n-2}+b_{n-1}e_{n-1}+ \sqrt{a^{n-1}_1}e_n\]
where $a_1\neq0$.
\end{theorem}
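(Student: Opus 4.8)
The plan is to follow the same computational strategy used in the proofs of Theorems \ref{3.1} and \ref{3.2}, now adapted to the extra relation $e_ne_n=e_{n-1}$ that distinguishes $\mu_{1,2}$ from $\mu_{1,1}$. I would set $\varphi(e_1)=\sum_{i=1}^n a_ie_i$ and $\varphi(e_n)=\sum_{i=1}^n b_ie_i$ as the two ``free'' images, then determine everything else from the multiplication table. For $2\le i\le n-1$ the products $e_i=e_1e_{i-1}$ force $\varphi(e_i)=\varphi(e_1)^i$ exactly as before; since $\mathcal{A}^n$ is spanned only by $e_{n-1}$ (note $e_{n-1}=e_1e_{n-2}$ still lies in the ``filiform chain''), the $e_n$-component of each $\varphi(e_i)$ vanishes for $2\le i\le n-1$, giving the stated sums running up to $j=n-1$.

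The key new input is the relation $e_ne_n=e_{n-1}$. First I would compute $\varphi(e_1)\varphi(e_n)=\varphi(e_1e_n)=\varphi(0)=0$ and $\varphi(e_n)\varphi(e_1)=0$ to pin down the low-degree coefficients $b_1,\dots$ of $\varphi(e_n)$. Multiplying out $\bigl(\sum a_ie_i\bigr)\bigl(\sum b_ie_i\bigr)$ and using $e_ke_\ell=e_{k+\ell}$ for $k+\ell\le n-1$ together with $a_1\ne0$, one reads off that the bottom coefficients of $\varphi(e_n)$ must vanish, leaving $\varphi(e_n)=b_{n-2}e_{n-2}+b_{n-1}e_{n-1}+b_ne_n$ as the only surviving terms. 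Then I would impose $\varphi(e_n)^2=\varphi(e_ne_n)=\varphi(e_{n-1})=\varphi(e_1)^{n-1}$. Expanding $\varphi(e_n)^2$, the only nonzero product among the three surviving basis vectors is $e_n e_n=e_{n-1}$, so $\varphi(e_n)^2=b_n^2\,e_{n-1}$; comparing with the leading coefficient of $\varphi(e_1)^{n-1}$, whose $e_{n-1}$-coefficient is $a_1^{n-1}$, yields $b_n^2=a_1^{n-1}$, i.e. $b_n=\sqrt{a_1^{\,n-1}}$ (an algebraically closed field of characteristic zero guarantees the square root exists).

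Next I would recover $b_{n-2}$. Comparing the $e_{n-2}$-components: applying $\varphi$ to the identity $e_1e_{n-1}=e_n e_n$ (both equal $e_{n}$? — here I must instead use the correct source relation). The cleanest route is to compute $\varphi(e_2)$ directly: from $e_2=e_1e_1$ we get $\varphi(e_2)=\varphi(e_1)^2$, but the relation $e_ne_n=e_{n-1}$ together with the way $e_{n-1}$ re-enters forces the extra term $a_n^2e_{n-1}$ in $\varphi(e_2)$, which I would verify by matching the $e_{n-1}$-coefficient of $\varphi(e_1)^2$ against $\varphi(e_1e_1)$ computed using \emph{all} products including $e_ne_n$. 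Finally, to obtain $b_{n-2}=-a_n\sqrt{a_1^{\,n-3}}$ I would equate the $e_{n-2}$-coefficients on both sides of a mixed relation such as $\varphi(e_n)\varphi(e_2)=\varphi(e_ne_2)$ (which is $0$ since $e_n e_2$ is an omitted product), producing a linear equation in $b_{n-2}$ with leading term $a_1 b_{n-2}$ and an inhomogeneous part involving $a_n$ and $b_n=\sqrt{a_1^{\,n-1}}$; solving gives the stated value.

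The main obstacle is bookkeeping the cross-terms correctly: because $e_n$ now participates in a nonzero product, the naive identity $\varphi(e_i)=\varphi(e_1)^i$ must be checked against \emph{every} relation, not just $e_i=e_1e_{i-1}$, and one must be careful that the only interactions feeding back into the top of the chain come through $e_ne_n$. Concretely, the delicate point is confirming that $b_1=\dots=b_{n-3}=0$ and that no spurious $e_{n-2}$ or $e_{n-1}$ contributions are missed when expanding the squares; once the vanishing is established, the remaining square-root extractions and the single linear solve for $b_{n-2}$ are routine. The converse direction (that a map of the stated form is genuinely an automorphism) is then checked by verifying it preserves all three types of products and is invertible, the latter following from $a_1\ne0$ via the triangular structure of the matrix of $\varphi$.
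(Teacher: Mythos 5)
Your overall strategy is the right one and matches how the paper handles this family (the paper proves Theorem \ref{3.3} ``similarly'' to Theorem \ref{3.2}, i.e.\ by writing $\varphi(e_1)=\sum a_ie_i$, $\varphi(e_n)=\sum b_ie_i$, forcing $\varphi(e_i)=\varphi(e_1)^i$ from $e_i=e_1e_{i-1}$, and then exploiting the relations involving $e_n$). Your treatment of $\varphi(e_2)$ and of $b_n$ is correct: the square $\varphi(e_1)^2$ picks up the extra term $a_n^2e_{n-1}$ from $e_ne_n=e_{n-1}$, and $\varphi(e_n)^2=b_n^2e_{n-1}=\varphi(e_1)^{n-1}$ gives $b_n=\sqrt{a_1^{\,n-1}}$.

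However, the step you propose for recovering $b_{n-2}$ fails. You suggest equating $e_{n-2}$-coefficients in $\varphi(e_n)\varphi(e_2)=\varphi(e_ne_2)=0$. But once $b_1=\dots=b_{n-3}=0$ is established, $\varphi(e_n)$ is supported on $e_{n-2},e_{n-1},e_n$ while $\varphi(e_2)$ is supported on $e_2,\dots,e_{n-1}$; every product $e_ie_j$ with $i\ge n-2$, $2\le j\le n-1$ has $i+j\ge n$ and is not $e_ne_n$, hence vanishes. So $\varphi(e_n)\varphi(e_2)=0$ identically and yields no equation in $b_{n-2}$ --- there is no ``linear equation with leading term $a_1b_{n-2}$'' coming from that relation. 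The constraint you need is already contained in the identity you wrote down first but only partially exploited: the $e_{n-1}$-coefficient of $0=\varphi(e_1)\varphi(e_n)=\bigl(\sum a_ie_i\bigr)\bigl(\sum b_ie_i\bigr)$ is $\sum_{i+j=n-1}a_ib_j+a_nb_n$, which after $b_1=\dots=b_{n-3}=0$ reduces to $a_1b_{n-2}+a_nb_n=0$, giving $b_{n-2}=-a_nb_n/a_1=-a_n\sqrt{a_1^{\,n-3}}$. (The same equation also falls out of $\varphi(e_n)\varphi(e_1)=0$, since $e_{n-2}e_1=e_{n-1}$ and $e_ne_n=e_{n-1}$ both contribute.) In other words, you stopped reading coefficients off $\varphi(e_1)\varphi(e_n)=0$ one step too early, concluded that $b_{n-2}$ was still free, and then reached for a relation that is vacuous. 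The aside about $e_1e_{n-1}=e_ne_n$ is also off ($e_1e_{n-1}=0$ in $\mu_{1,2}$), though you abandon that route yourself. With the $b_{n-2}$ derivation corrected, the rest of your argument, including the converse via the triangular matrix and $a_1\neq0$, goes through.
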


\begin{theorem}  \label{3.4}
A linear map $\varphi:\mu_{1,3} \to \mu_{1,3} $ is an automorphism if and only if the map $\varphi$ has the
following form:
\[\varphi(e_1)=\sum\limits_{i=1}^na_ie_i,\]
\[\varphi(e_2)=\sum\limits_{j=2}^{n-1} (\sum\limits_{k_1 +k_2=j} a_{k_1}\cdot a_{k_2})e_j+a_1a_ne_{n-1}\]
\[\varphi(e_i)=\sum\limits_{j=i}^{n-1} (\sum\limits_{k_1 +k_2 +... +k_i=j} a_{k_1}\cdot a_{k_2}\cdot ... \cdot a_{k_i})e_j, \quad   3\leq i \leq n-1 \]
\[\varphi(e_n)=b_{n-1}e_{n-1}+a^{n-2}_1e_{n}\]
where $a_1\neq0$.
\end{theorem}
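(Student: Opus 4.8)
The plan is to mirror the strategy of Theorems \ref{3.2} and \ref{3.3}: an automorphism of $\mu_{1,3}$ is completely determined by the images $\varphi(e_1)$ and $\varphi(e_n)$, because $e_i=e_1^{\,i}$ for $2\le i\le n-1$, while $e_n$ is a second generator linked to the rest only through the anomalous relations $e_1e_n=e_{n-1}$ and $e_ne_1=0$. So I would write $\varphi(e_1)=\sum_{i=1}^n a_ie_i$ and $\varphi(e_n)=\sum_{i=1}^n b_ie_i$ and extract all constraints from multiplicativity.

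First, using $\varphi(e_i)=\varphi(e_1)^i$ for $2\le i\le n-1$, I would expand $\bigl(\sum_i a_ie_i\bigr)^i$ in $\mu_{1,3}$, keeping track of where the special product $e_1e_n=e_{n-1}$ can contribute. A monomial $e_{k_1}e_{k_2}\cdots e_{k_i}$ (evaluated left to right) can only invoke $e_1e_n$ when the partial product has collapsed to $e_1$, i.e.\ in $e_1e_n$ itself; in every longer product the factor $e_n$ is either a left factor (and $e_ne_j=0$) or it is reached after the running product already equals $e_{n-1}$ (and $e_{n-1}e_k=0$). Hence the special relation survives only in the two-fold product, producing the single extra term $a_1a_ne_{n-1}$ in $\varphi(e_2)$ --- note that $e_ne_1=0$ gives no symmetric companion --- while for $3\le i\le n-1$ the expansion reduces to the plain multinomial $\sum_{j=i}^{n-1}\bigl(\sum_{k_1+\cdots+k_i=j}a_{k_1}\cdots a_{k_i}\bigr)e_j$. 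This yields the asserted formulas for $\varphi(e_1),\dots,\varphi(e_{n-1})$, and in particular $\varphi(e_{n-1})=a_1^{\,n-1}e_{n-1}$, since the only partition of $n-1$ into $n-1$ positive parts is all ones.

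Next I would pin down $\varphi(e_n)$ from the two relations involving $e_n$. Applying $\varphi$ to $e_1e_n=e_{n-1}$ gives $\varphi(e_1)\varphi(e_n)=a_1^{\,n-1}e_{n-1}$; comparing the coefficients of $e_2,\dots,e_{n-2}$ and using $a_1\ne0$ forces, by induction on the index, $b_1=\cdots=b_{n-3}=0$, while the coefficient of $e_{n-1}$ yields $b_{n-2}+b_n=a_1^{\,n-2}$. The remaining ambiguity between $b_{n-2}$ and $b_n$ is resolved by the non-commutativity relation $e_ne_1=0$: substituting the partial information into $\varphi(e_n)\varphi(e_1)=0$, the only surviving term is $a_1b_{n-2}e_{n-1}$, so $b_{n-2}=0$ and hence $b_n=a_1^{\,n-2}$, with $b_{n-1}$ free. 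This gives exactly $\varphi(e_n)=b_{n-1}e_{n-1}+a_1^{\,n-2}e_n$.

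Finally, for the converse and for bijectivity I would observe that in the basis $\{e_1,\dots,e_n\}$ the matrix of any map of the stated form is lower triangular with diagonal $a_1,a_1^{2},\dots,a_1^{\,n-1},a_1^{\,n-2}$, so $\det\varphi$ is a nonzero power of $a_1$ and $\varphi$ is invertible precisely when $a_1\ne0$; multiplicativity on all basis pairs (namely $\varphi(e_i)\varphi(e_j)=\varphi(e_{i+j})$ for $i+j\le n-1$, $\varphi(e_1)\varphi(e_n)=\varphi(e_{n-1})$, and all other products vanishing) is then a direct check from the formulas just obtained. I expect the only genuinely delicate point to be the bookkeeping of the special relation in the first step together with its asymmetry $e_1e_n\ne e_ne_1$, which is precisely what makes the extra term appear only in $\varphi(e_2)$ and what kills $b_{n-2}$; everything else is routine multinomial algebra.
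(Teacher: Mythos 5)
Your proof is correct and takes essentially the same route as the paper, which only sketches Theorem \ref{3.4} as ``similar to'' Theorem \ref{3.2}: write $\varphi(e_1)=\sum_i a_ie_i$ and $\varphi(e_n)=\sum_i b_ie_i$, obtain $\varphi(e_2),\dots,\varphi(e_{n-1})$ as powers of $\varphi(e_1)$, and use the relations involving $e_n$ to pin down $\varphi(e_n)$. Your bookkeeping of where the special product $e_1e_n=e_{n-1}$ can contribute (only in $\varphi(e_2)$, giving the extra term $a_1a_ne_{n-1}$) and your use of $e_ne_1=0$ to force $b_{n-2}=0$ and hence $b_n=a_1^{n-2}$ are exactly the computations the paper leaves implicit.
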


\begin{theorem}  \label{3.5}
A linear map $\varphi:\mu_{1,4} \to \mu_{1,4} $ is an automorphism if and only if the map $\varphi$ has the
following form:
\[\varphi(e_1)=\sum\limits_{i=1}^na_ie_i,\]
\[\varphi(e_2)=\sum\limits_{j=2}^{n-1} (\sum\limits_{k_1 +k_2=j} a_{k_1}\cdot a_{k_2})e_j+(a_1a_n+a^2_n)e_{n-1},\]
\[\varphi(e_i)=\sum\limits_{j=i}^{n-1} (\sum\limits_{k_1 +k_2 +... +k_i=j} a_{k_1}\cdot a_{k_2}\cdot ... \cdot a_{k_i})e_j, \quad   3\leq i \leq n-1, \]
\[\varphi(e_n)=-a_n e_{n-2}+b_{n-1}e_{n-1}+e_n\]
where $a_1=1$.
\end{theorem}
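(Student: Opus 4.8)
The plan is to mimic the proofs of Theorems~\ref{3.2}--\ref{3.4}: pin down $\varphi$ on the two ``generators'' $e_1,e_n$ (the basis vectors surviving in $\mathcal{A}/\mathcal{A}^2=\langle e_1,e_n\rangle$), propagate to $e_2,\dots,e_{n-1}$ through the identities $e_i=e_1^i$, and then force the coefficients by applying $\varphi$ to the two exceptional products $e_1e_n=e_{n-1}$, $e_ne_n=e_{n-1}$ together with the vanishing products $e_ne_1=0$ and $e_ke_n=0$. First I would set $\varphi(e_1)=\sum_{i=1}^n a_ie_i$ and $\varphi(e_n)=\sum_{i=1}^n b_ie_i$; since $\varphi$ is bijective and the induced action on $\mathcal{A}^2/\mathcal{A}^3$ is multiplication by $a_1^2$, we get $a_1\neq 0$ from the start.

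Next I would compute $\varphi(e_i)=\varphi(e_1)^i$ for $2\le i\le n-1$. Writing $\varphi(e_1)=X+a_ne_n$ with $X=\sum_{k=1}^{n-1}a_ke_k$, the key fact is that the only nonzero products involving $e_n$ are $e_1e_n=e_ne_n=e_{n-1}$, every other product with a factor $e_n$ vanishing. The first delicate point is to decide which monomials of $(X+a_ne_n)^i$ survive: for $i\ge 3$ every monomial containing a factor $e_n$ dies (either that factor meets some $e_k$ and gives $0$, or it produces an $e_{n-1}$ while a factor still remains to its right, again giving $0$), so $\varphi(e_i)=X^i=\sum_{j=i}^{n-1}\big(\sum_{k_1+\cdots+k_i=j}a_{k_1}\cdots a_{k_i}\big)e_j$; only for $i=2$ do the two monomials $e_1e_n$ and $e_ne_n$ both contribute, adding the extra summand $(a_1a_n+a_n^2)e_{n-1}$. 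In particular $\varphi(e_{n-1})=a_1^{\,n-1}e_{n-1}$, since the sole composition of $n-1$ into $n-1$ positive parts is the all-ones one.

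The main work, and the principal obstacle, is to determine $\varphi(e_n)$. Applying $\varphi$ to $e_ne_1=0$ and reading the lowest coefficient gives $a_1b_1=0$, hence $b_1=0$; comparing the coefficients of $e_2,\dots,e_{n-2}$ in $\varphi(e_n)\varphi(e_1)=0$ and using $a_1\neq 0$ forces $b_1=\cdots=b_{n-3}=0$ by a short induction, so $\varphi(e_n)=b_{n-2}e_{n-2}+b_{n-1}e_{n-1}+b_ne_n$. I would then expand the $e_{n-1}$-coefficients of the two relations $\varphi(e_1)\varphi(e_n)=\varphi(e_{n-1})$ and $\varphi(e_n)\varphi(e_1)=0$; these read $a_1b_{n-2}+(a_1+a_n)b_n=a_1^{\,n-1}$ and $a_1b_{n-2}+a_nb_n=0$, and subtracting them eliminates $b_{n-2}$ and yields $b_n=a_1^{\,n-2}$, while the second equation gives $b_{n-2}=-a_nb_n/a_1$. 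The crux is the remaining relation $e_ne_n=e_{n-1}$, which becomes $b_n^2=a_1^{\,n-1}$: combined with $b_n=a_1^{\,n-2}$ it gives $a_1^{\,2(n-2)}=a_1^{\,n-1}$, forcing $a_1=1$, whence $b_n=1$ and $b_{n-2}=-a_n$. This simultaneous use of \emph{both} exceptional relations is what distinguishes $\mu_{1,4}$: in $\mu_{1,2}$ only $e_ne_n=e_{n-1}$ is present (yielding $b_n=\sqrt{a_1^{\,n-1}}$, Theorem~\ref{3.3}) and in $\mu_{1,3}$ only $e_1e_n=e_{n-1}$ (yielding $b_n=a_1^{\,n-2}$, Theorem~\ref{3.4}), whereas here their compatibility constrains $a_1$ itself; this is the step I expect to require the most care.

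Finally I would check the converse. Multiplicativity on the generic products $e_ie_j=e_{i+j}$ is automatic from $\varphi(e_i)=\varphi(e_1)^i$ (and $X^n=0$ takes care of $i+j=n$), and one verifies directly that the exceptional and vanishing products $e_1e_n,\ e_ne_n,\ e_ne_1,\ e_ke_n$ are respected, the correction terms $-a_ne_{n-2}$ in $\varphi(e_n)$ and $(a_1a_n+a_n^2)e_{n-1}$ in $\varphi(e_2)$ being exactly what makes the two relations $e_1e_n=e_ne_n=e_{n-1}$ consistent. Bijectivity is then immediate: with $a_1=b_n=1$ the associated graded endomorphism is the identity on each $\mathcal{A}^k/\mathcal{A}^{k+1}$ for $k\ge 2$ and has determinant $a_1b_n-b_1a_n=1$ on $\mathcal{A}/\mathcal{A}^2$, so $\varphi$ is an automorphism.
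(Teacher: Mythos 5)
Your strategy is exactly the one the paper intends: Theorem \ref{3.5} is stated without proof (the paper only says it is proved ``similarly'' to Theorem \ref{3.2}), and your computation follows that template faithfully. The bookkeeping is correct up to the last step: $b_1=\cdots=b_{n-3}=0$ from $\varphi(e_n)\varphi(e_1)=0$; the two $e_{n-1}$-coefficient equations $a_1b_{n-2}+a_nb_n=0$ and $a_1b_{n-2}+(a_1+a_n)b_n=a_1^{n-1}$, whence $b_n=a_1^{n-2}$ and $b_{n-2}=-a_nb_n/a_1$; and $b_n^2=a_1^{n-1}$ from $e_ne_n=e_{n-1}$. The analysis of which monomials of $\varphi(e_1)^i$ survive for $i\ge 3$ is also right.

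However, the step you yourself single out as the crux contains a genuine error: from $a_1^{2(n-2)}=a_1^{n-1}$ and $a_1\neq 0$ you may only conclude $a_1^{n-3}=1$, not $a_1=1$. Over the algebraically closed field of characteristic zero of Theorem \ref{2.2} this leaves $n-3$ admissible values of $a_1$, and for $n\ge 5$ the extra ones genuinely occur: for $n=5$ the diagonal map $\varphi(e_i)=(-1)^ie_i$ satisfies every defining relation of $\mu_{1,4}$ (e.g. $\varphi(e_1)\varphi(e_5)=e_1e_5=e_4=\varphi(e_4)$, $\varphi(e_5)\varphi(e_5)=e_5e_5=e_4=\varphi(e_4)$, $\varphi(e_5)\varphi(e_1)=e_5e_1=0$) and is therefore an automorphism with $a_1=-1$. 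Since you have already used all the relations of the algebra, no further constraint is available to upgrade $a_1^{n-3}=1$ to $a_1=1$; the correct conclusion of your own computation is $a_1^{n-3}=1$, $b_n=a_1^{n-2}=a_1$, $b_{n-2}=-a_na_1^{n-3}=-a_n$, i.e. $\varphi(e_n)=-a_ne_{n-2}+b_{n-1}e_{n-1}+a_1e_n$ with $a_1^{n-3}=1$. In other words, the ``only if'' direction of the statement as printed is valid only for $n=4$; your proof cannot be repaired to yield $a_1=1$ in general, and the discrepancy should be reported rather than absorbed into the phrase ``forcing $a_1=1$.'' (The ``if'' direction and your bijectivity argument for maps with $a_1=1$ are fine.)
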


\section{Description of local automorphisms of finite-dimensional null-filiform and filiform  associative algebras}

Now we give a description of local automorphisms of the associative algebras from the theorems \ref{2.1} and \ref{2.2}.

{\bf Definition.}
Let $A$ be an algebra. A linear map $\Phi : A \to A$ is called a local automorphism, if for
any element $x \in A$ there exists an automorphism $\varphi_x : A \to A$ such that $\Phi(x) = \varphi_x(x)$.

\begin{theorem} \label{0}
A linear map $\Phi$ is a local automorphism of $\mu_0$ if and only if the matrix of $\Phi$ has
the following lower triangular form
\[
\begin{pmatrix}
b_{1,1}&0&0&\dots&0&0\\
b_{2,1}&b_{2,2}&0&\dots&0&0\\
b_{3,1}&b_{3,2}&b_{3,3}&\dots&0&0\\
\vdots&\vdots&\vdots&\ddots&\vdots&\vdots\\
b_{n-1,1}&b_{n-1,2}&b_{n-1,3}&\dots&b_{n-1,n-1}&0\\
b_{n,1}&b_{n,2}&b_{n,3}&\dots&b_{n,n-1}&b_{n,n}\\
\end{pmatrix}
\]
\end{theorem}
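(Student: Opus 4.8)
The plan is to prove both implications, the forward one being routine and the converse being the substance. For \emph{necessity}, suppose $\Phi$ is a local automorphism and apply the defining property to each basis vector $e_i$: there is an automorphism $\varphi_{e_i}$ with $\Phi(e_i)=\varphi_{e_i}(e_i)$. By Theorem~\ref{3.1} every automorphism sends $e_i$ into the span of $e_i,e_{i+1},\dots,e_n$, with the coefficient of $e_i$ equal to $a_1^{i}\neq 0$. Hence the $i$-th column of the matrix of $\Phi$, namely the coordinate vector of $\Phi(e_i)$, is supported on rows $i,\dots,n$, so the matrix is lower triangular. The same computation shows that each diagonal entry satisfies $b_{i,i}=a_1^{i}\neq 0$; thus nonvanishing of the diagonal is forced, and the converse will show it is exactly the condition needed.

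For \emph{sufficiency}, let $\Phi$ be lower triangular with nonzero diagonal and fix a nonzero $x=\sum_{i=m}^{n}c_ie_i$ with $c_m\neq 0$ (the case $x=0$ is trivial). Since $\Phi$ is lower triangular, $\Phi(x)=\sum_{j=m}^{n}d_je_j$ with $d_m=b_{m,m}c_m$. I must produce scalars $a_1\neq 0,a_2,\dots,a_n$ so that the automorphism $\varphi$ of Theorem~\ref{3.1} determined by $\varphi(e_1)=\sum_{k}a_ke_k$ satisfies $\varphi(x)=\Phi(x)$. Writing $\varphi(e_i)=\sum_{j=i}^{n}P_{j,i}\,e_j$ with $P_{j,i}=\sum_{k_1+\cdots+k_i=j}a_{k_1}\cdots a_{k_i}$, the condition $\varphi(x)=\Phi(x)$ becomes, by comparing coefficients of $e_{m+p}$ for $p=0,1,\dots,n-m$,
\[
\sum_{i=m}^{m+p}c_i\,P_{m+p,\,i}=d_{m+p}.
\]

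The heart of the argument is that this system can be solved recursively because it is triangular in $a_1,a_2,\dots$. The $p=0$ equation reads $c_m a_1^{m}=d_m=b_{m,m}c_m$, i.e.\ $a_1^{m}=b_{m,m}$, which has a nonzero solution $a_1$ since $b_{m,m}\neq 0$ and the ground field is algebraically closed. For $p\geq 1$ the key bookkeeping step is to check that the ``new'' variable $a_{p+1}$ enters $\sum_{i=m}^{m+p}c_iP_{m+p,i}$ only through the index $i=m$ and only linearly: a monomial $a_{k_1}\cdots a_{k_i}$ with $k_1+\cdots+k_i=m+p$ can contain a factor $a_{p+1}$ only if the remaining $i-1$ factors sum to $m-1$, forcing $i\le m$, hence $i=m$ with the other factors all equal to $a_1$; this contributes $c_m\,m\,a_1^{m-1}a_{p+1}$. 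Thus the $p$-th equation has the form $c_m\,m\,a_1^{m-1}a_{p+1}+Q_p(a_1,\dots,a_p)=d_{m+p}$, where $Q_p$ depends only on the already-determined variables, and since the characteristic is zero the coefficient $c_m\,m\,a_1^{m-1}$ is nonzero. Solving successively for $a_2,\dots,a_{n-m+1}$ (and setting the unconstrained $a_{n-m+2},\dots,a_n$ to $0$) yields an automorphism $\varphi_x$ with $\varphi_x(x)=\Phi(x)$, so $\Phi$ is a local automorphism.

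The main obstacle is this converse, and within it the triangularity verification just sketched: one must confirm that among all products $a_{k_1}\cdots a_{k_i}$ of total degree $m+p$ the variable $a_{p+1}$ appears linearly and solely via $i=m$, so that the diagonal coefficients $c_m\,m\,a_1^{m-1}$ are invertible and the $a_k$ can be peeled off one at a time. I would also flag explicitly the two points where the field hypotheses are used, namely extraction of the $m$-th root giving $a_1$ (algebraic closedness) and the non-vanishing of $m\,a_1^{m-1}$ (characteristic zero), together with the observation from the necessity half that the diagonal entries must be nonzero for the statement to hold.
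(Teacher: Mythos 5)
Your proof is correct and follows essentially the same route as the paper: necessity by evaluating at the basis vectors against the automorphism form of Theorem~\ref{3.1}, and sufficiency by locating the first nonzero coordinate $x_m$ of $x$, extracting $a_1$ from $a_1^m=b_{m,m}$, and then solving the remaining equations recursively because the new parameter enters linearly with coefficient $m\,a_1^{m-1}\neq 0$. Your bookkeeping of why $a_{p+1}$ appears only linearly and only via the $i=m$ term is a cleaner packaging of the paper's explicit case-by-case solution, but the underlying argument is the same.
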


\begin{proof}
Let $\Phi$ be an arbitrary local automorphism on $\mu_0$.
By the definition, for any element $x\in \mu_0$, there exists an automorphism $\varphi_x$ on $\mu_0$ such that
$$
\Phi(x)=\varphi_x(x).
$$
By theorem \ref{3.1}, the automorphism $\varphi_x$ has a matrix of the following form:
\begin{tiny}
\[
A_x=\begin{pmatrix}
        a_1^x&0&\dots&0&0\\
        a_2^x&(a_1^x)^2&\dots&0&0\\
        \vdots&\vdots&\ddots&\vdots&\vdots\\
         a_{n-1}^x&\sum\limits_{k_1+k_2=n-1}a^x_{k_1}a^x_{k_2}&\dots&(a_1^x)^{n-1}&0\\
         a_n^x&\sum\limits_{k_1+k_2=n}a^x_{k_1}a^x_{k_2} &\dots&\sum\limits_{{k_1+k_2+\dots+k_{n-1}}=n-1} a^x_{k_1}a^x_{k_2}\dots a^x_{k_{n-1}}&(a^x_1)^n
\end{pmatrix}.
\]
\end{tiny}

Let $A$ be the matrix of $\Phi$ and
\[
A=
\begin{pmatrix}
b_{1,1}&b_{1,2}&b_{1,3}&\dots&b_{1,n-1}&b_{1,n}\\
b_{2,1}&b_{2,2}&b_{2,3}&\dots&b_{2,n-1}&b_{2,n}\\
b_{3,1}&b_{3,2}&b_{3,3}&\dots&b_{3,n-1}&b_{3,n}\\
\vdots&\vdots&\vdots&\ddots&\vdots&\vdots\\
b_{n-1,1}&b_{n-1,2}&b_{n-1,3}&\dots&b_{n-1,n-1}&b_{n-1,n}\\
b_{n,1}&b_{n,2}&b_{n,3}&\dots&b_{n,n-1}&b_{n,n}\\
\end{pmatrix}.
\]

Then, by choosing subsequently $x=e_1$, $x=e_2$, $\ldots, x=e_n$ and using $\Phi(x)=\varphi_x(x)$, i.e. $A\bar{x}=A_x\bar{x}$,
where $\bar{x}=(x_1,x_2,\dots x_n)^T$ is the vector corresponding to $x=x_1e_1+\dots+x_ne_n$,
we have $b_{i,j}=0$, $i<j$, and $b_{k,k}\neq 0$, $1\leq k\leq n$, which implies
\[
A=\begin{pmatrix}
b_{1,1}&0&0&\dots&0&0\\
b_{2,1}&b_{2,2}&0&\dots&0&0\\
b_{3,1}&b_{3,2}&b_{3,3}&\dots&0&0\\
\vdots&\vdots&\vdots&\ddots&\vdots&\vdots\\
b_{n-1,1}&b_{n-1,2}&b_{n-1,3}&\dots&b_{n-1,n-1}&0\\
b_{n,1}&b_{n,2}&b_{n,3}&\dots&b_{n,n-1}&b_{n,n}\\
\end{pmatrix}\]

Now we prove that the linear operator, defined by the matrix $A$ is a local automorphism.
If, for each element $x\in \mu_0$, there exists a matrix $A_x$ of the form in theorem
\ref{3.1} such that
\[
A\bar{x}=A_x\bar{x},     \eqno{(1)}
\]
then the linear operator, defined by the matrix $A$ is a local automorphism.
In other words, if, for each element $x\in \mu_0$, the system of equations
\[
\left\{
\begin{array}{l}
b_{1,1}x_1=a_1^xx_1,\\
\sum\limits_{j=1}^i b_{i,j}x_j=a_i^xx_1+\sum\limits_{j=2}^{i}\sum\limits_{k_1+k_2+\dots+k_j=i}a^x_{k_1}a^x_{k_2}\dots a^x_{k_j}x_j,\quad 2\leq i\leq n,\\
\end{array}
\right.     \eqno{(2)}
\]
obtained from (1.1), has a solution with respect to the variables
$$
a_1^x,\ a_2^x,\ \dots a_n^x,
$$
then the linear operator, defined by the matrix $A$, is a local automorphism.

Let us consider the following cases
\begin{itemize}
  \item If $x_1\neq0$ then $a_1^x=b_{1,1}$,$a_2^x=b_{2,1}+\frac{1}{x_1}(b_{2,2}-(a_1^x)^2)x_2,$ \\ $a_i^x=b_{i,1}+\frac{1}{x_1}\sum\limits_{j=2}^i(b_{i,j}-\sum\limits_{k_1+k_2+\dots+k_j=i}a^x_{k_1}a^x_{k_2}\dots a^x_{k_j})x_j$, where $3\leq i\leq n$ $(a_1^x\neq0)$.
  \item If $x_1=0$ and $x_2\neq0$ then $(a_1^x)^2=b _{2,2}$,\\ $a_{i-1}^x=\frac{1}{2a_1^x} (b_{i,2}-\sum\limits_{k_1+k_2=i}^{k_1\neq1,k_2\neq1 }a_{k_1}a_{k_2} +\frac{1}{x_2}\sum\limits_{j=3}^i(b_{i,j}-\sum\limits_{k_1+k_2+\dots+k_j=i}a^x_{k_1}a^x_{k_2}\dots a^x_{k_j})x_j)$, where $3\leq i\leq n,$  $(a_1^x\neq0)$.
  \item If $x_1=0, x_2=0$ and $x_3\neq0$ then $(a_1^x)^3=b _{3,3}$,\\ $a_{i-2}^x=\frac{1}{3(a_1^x)^2} (b_{i,3}-\sum\limits_{k_1+k_2+k_3=i}^{k_{l_1}+k_{l_2}\neq2 }a_{k_1}a_{k_2}a_{k_3} +\frac{1}{x_3}\sum\limits_{j=4}^i(b_{i,j}-$\\
      $\sum\limits_{k_1+k_2+\dots+k_j=i}a^x_{k_1}a^x_{k_2}\dots a^x_{k_j})x_j)$, where $4\leq i\leq n,$ $l_1,l_2\in\{1,2,3\},$ $(a_1^x\neq0)$.
  \item If $x_1=x_2=\dots=x_{m-1}=0$ and $x_m\neq0$ then $(a_1^x)^m=b_{m,m}$,\\
  $a_{i-m+1}^x=\frac{1}{m(a_1^x)^{m-1}}\Big(b_{i,k}-\sum\limits_{k_1+k_2+\dots+k_m=i}^{k_{l_1}+k_{l_2}+\dots+k_{l_{m-1}}\neq m-1 }a_{k_1}a_{k_2}\dots a_{k_m}+\\
  +\frac{1}{x_k}\sum\limits_{j=k+1}^i\big(b_{i,j}-\sum\limits_{k_1+k_2+\dots+k_j=i}a^x_{k_1}a^x_{k_2}\dots a^x_{k_j})x_j\Big),$
  where \\ $m\leq i\leq n,$ $l_1,l_2,\dots, l_{m-1}\in\{1,2,\dots,m\},$ $(a_1^x\neq0)$.
\end{itemize}
Hence, the system of equation (2) always has a solution.
Therefore, the linear operator, defined by the matrix $A$ is
a local automorphism. This completes the proof.
\end{proof}

\begin{theorem} \label{1.1}
A linear map $\Phi$ is a local automorphism of $\mu_{1,1}$ if and only if the matrix of $\Phi$ has
the following lower triangular form
\[
\begin{pmatrix}
b_{1,1}&0&0&\dots&0&0\\
b_{2,1}&b_{2,2}&0&\dots&0&0\\
b_{3,1}&b_{3,2}&b_{3,3}&\dots&0&0\\
\vdots&\vdots&\vdots&\ddots&\vdots&\vdots\\
b_{n-1,1}&b_{n-1,2}&b_{n-1,3}&\dots&b_{n-1,n-1}&b_{n-1,n}\\
b_{n,1}& 0 & 0 &\dots& 0 &b_{n,n}\\
\end{pmatrix}
\]
\end{theorem}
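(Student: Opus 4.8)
The plan is to mirror the proof of Theorem~\ref{0}, proving the two implications separately, while tracking the two structural features that distinguish $\mu_{1,1}$ from $\mu_0$: every product involving $e_n$ vanishes, and no product of basis elements reaches degree $n$. The first feature frees the image of $e_n$ to live in $\mathrm{span}\{e_{n-1},e_n\}$; the second forces the images of $e_1,\dots,e_{n-1}$ to have no $e_n$-component. Together these dictate exactly the displayed sparsity pattern.

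For necessity, I would take a local automorphism $\Phi$ with matrix $A=(b_{i,j})$ and, for each $j$, choose an automorphism $\varphi_{e_j}$ with $\Phi(e_j)=\varphi_{e_j}(e_j)$. Since $\bar e_j$ is the $j$-th coordinate vector, $A\bar e_j=A_{e_j}\bar e_j$ says the $j$-th column of $A$ equals the $j$-th column of the automorphism matrix described in Theorem~\ref{3.2}. Reading these off: column $1$ is unconstrained, so $b_{1,1}=a_1^{e_1}\neq0$; for $2\le j\le n-1$ the $j$-th column of an automorphism is supported in rows $j,\dots,n-1$, forcing $b_{i,j}=0$ for $i<j$, $b_{n,j}=0$, and $b_{j,j}=(a_1^{e_j})^{j}\neq0$; and the $n$-th column is supported only in rows $n-1,n$, forcing $b_{i,n}=0$ for $i\le n-2$ and $b_{n,n}\neq0$. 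These are precisely the displayed entries, with the single above-diagonal slot $b_{n-1,n}$ left free. This half is immediate.

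The substantial half is sufficiency: given $A$ of the displayed form with nonzero diagonal, I must exhibit, for every $x=\sum_j x_je_j$, automorphism parameters $a_1,\dots,a_n$ (with $a_1\neq0$) together with $b_{n-1}',b_n'$ (with $b_n'\neq0$, where $\varphi_x(e_n)=b_{n-1}'e_{n-1}+b_n'e_n$) solving $A\bar x=A_x\bar x$. Writing $P_{i,j}=\sum_{k_1+\dots+k_j=i}a_{k_1}\cdots a_{k_j}$, the row-$i$ equation for $1\le i\le n-2$ coincides with the $\mu_0$ equation, the row-$(n-1)$ equation carries the extra term $b_{n-1}'x_n$ against $b_{n-1,n}x_n$, and the row-$n$ equation collapses to $b_{n,1}x_1+b_{n,n}x_n=a_nx_1+b_n'x_n$ because $b_{n,2}=\dots=b_{n,n-1}=0$. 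I would then run the same case split as in Theorem~\ref{0} on the least index $m$ with $x_m\neq0$: rows $1,\dots,m-1$ are automatically trivial; for $m\le n-1$ row $m$ fixes $a_1$ through $(a_1)^m=b_{m,m}\neq0$ (extracting an $m$-th root in the algebraically closed field), and rows $m+1,\dots,n-1$ then determine $a_2,\dots,a_{n-m}$ one at a time, since in row $i$ the coefficient $P_{i,m}$ equals $m(a_1)^{m-1}a_{i-m+1}$ plus terms in already-known lower-index $a$'s.

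The last two rows are where the argument genuinely departs from $\mu_0$, and that is the step I expect to be the main obstacle to phrase cleanly. The point is that the two new unknowns $b_{n-1}'$ and $b_n'$ supply exactly the slack needed to absorb the off-pattern entry $b_{n-1,n}$ and the truncated last row: in row $n-1$ the choice $b_{n-1}'=b_{n-1,n}$ cancels the $x_n$-terms and reduces it to the $\mu_0$-type equation used to solve for the last $a$, while in row $n$ the choice $b_n'=b_{n,n}\neq0$ leaves $a_nx_1=b_{n,1}x_1$, giving $a_n=b_{n,1}$ when $x_1\neq0$ and $a_n$ free otherwise. The boundary cases $m=n-1$ and $m=n$ are checked directly: there the recursion is vacuous and the remaining parameters are read straight off rows $n-1$ and $n$ (with $a_1$ free when $m=n$). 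Since every case yields a solution with $a_1\neq0$ and $b_n'\neq0$, the operator $A$ is a local automorphism, completing the proof.
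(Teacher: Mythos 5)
Your proposal is correct and follows essentially the same route as the paper: necessity by evaluating on the basis vectors and matching columns against the automorphism matrix of Theorem~\ref{3.2}, sufficiency by solving the system $A\bar x=A_x\bar x$ via a case split on the first nonzero coordinate of $x$. Your up-front choices $b_{n-1}'=b_{n-1,n}$ and $b_n'=b_{n,n}$ are just a cleaner bookkeeping of the free parameters the paper carries as ``defined arbitrarily,'' not a different argument.
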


\begin{proof}
Let $\Phi$ be an arbitrary local automorphism on $\mu_{1,1}$.
By the definition, for any element $x\in \mu_{1,1}$, there exists an automorphism $\varphi_x$ on $\mu_{1,1}$ such that
$$
\Phi(x)=\varphi_x(x).
$$
By theorem \ref{3.2}, the automorphism $\varphi_x$ has a matrix of the following form:
\begin{tiny}
\[
A_x=\begin{pmatrix}
        a_1^x&0&0&\dots&0&0\\
        a_2^x&(a_1^x)^2&0&\dots&0&0\\
        a_3^x&\sum\limits_{k_1+k_2=3}a^x_{k_1}a^x_{k_2}&(a_1^x)^3&\dots&0&0\\
        \vdots&\vdots&\vdots&\ddots&\vdots&\vdots\\
        a_{n-1}^x&\sum\limits_{k_1+k_2=n-1}a^x_{k_1}a^x_{k_2}&\sum\limits_{k_1+k_2+k_3=n-1}a^x_{k_1}a^x_{k_2}a^x_{k_3} &\dots&(a_1^x)^{n-1}&b^x_{n-1}\\
        a_n^x&0&0&\dots&0&b^x_n
\end{pmatrix}.
\]
\end{tiny}

Let $A$ be the matrix of $\Phi$ and
\[
A=
\begin{pmatrix}
b_{1,1}&b_{1,2}&b_{1,3}&\dots&b_{1,n-1}&b_{1,n}\\
b_{2,1}&b_{2,2}&b_{2,3}&\dots&b_{2,n-1}&b_{2,n}\\
b_{3,1}&b_{3,2}&b_{3,3}&\dots&b_{3,n-1}&b_{3,n}\\
\vdots&\vdots&\vdots&\ddots&\vdots&\vdots\\
b_{n-1,1}&b_{n-1,2}&b_{n-1,3}&\dots&b_{n-1,n-1}&b_{n-1,n}\\
b_{n,1}&b_{n,2}&b_{n,3}&\dots&b_{n,n-1}&b_{n,n}\\
\end{pmatrix}.
\]

Then, by choosing subsequently $x=e_1$, $x=e_2$, $\ldots, x=e_n$ and using $\Phi(x)=\varphi_x(x)$, i.e. $A\bar{x}=A_x\bar{x}$,
where $\bar{x}=(x_1,x_2,\dots x_n)^T$ is the vector corresponding to $x=x_1e_1+\dots+x_ne_n$,
we have $b_{i,j}=0$, $i<j$, $i\neq n-1$, and $b_{k,k}\neq 0$, $1\leq k\leq n$, which implies
\[
A=\begin{pmatrix}
b_{1,1}&0&0&\dots&0&0\\
b_{2,1}&b_{2,2}&0&\dots&0&0\\
b_{3,1}&b_{3,2}&b_{3,3}&\dots&0&0\\
\vdots&\vdots&\vdots&\ddots&\vdots&\vdots\\
b_{n-1,1}&b_{n-1,2}&b_{n-1,3}&\dots&b_{n-1,n-1}&b_{n-1,n}\\
b_{n,1}&0&0&\dots&0&b_{n,n}\\
\end{pmatrix}\]

Similar to the proof of Theorem \ref{0} we prove that, for each element $x\in \mu_{1,1}$, the system of equations
\[
\left\{
\begin{array}{l}
b_{1,1}x_1=a_1^xx_1,\\
\sum\limits_{j=1}^i b_{i,j}x_j=a_i^xx_1+\sum\limits_{j=2}^{i}\sum\limits_{k_1+k_2+\dots+k_j=i}a^x_{k_1}a^x_{k_2}\dots a^x_{k_j}x_j,\quad 2\leq i\leq n-2,\\
\sum\limits_{j=1}^{n} b_{n-1,j}x_j=a_{n-1}^xx_1+\sum\limits_{j=2}^{n-1}\sum\limits_{k_1+k_2+\dots+k_j=n-1}a^x_{k_1}a^x_{k_2}\dots a^x_{k_j}x_j+b_{n-1}^xx_n,\\
b_{n,1}x_1+b_{n,n}x_n=a_n^xx_1+b_n^xx_n,
\end{array}
\right.           \eqno{(3)}
\]
obtained from the equality $A\bar{x}=A_x\bar{x}$, has a solution with respect to the variables
$$
a_1^x,\ a_2^x,\ \dots a_n^x, b_{n-1}^x, b_n^x.
$$

Let us consider the following cases
\begin{itemize}

    \item If $x_1\neq0$ then $a_1^x=b_{1,1}$,\\ $a_i^x=b_{i,1}+\frac{1}{x_1}\sum\limits_{j=2}^i(b_{i,j}-\sum\limits_{k_1+k_2+\dots+k_j=i}a^x_{k_1}a^x_{k_2}\dots a^x_{k_j})x_j$, where $2\leq i\leq n-2$,\\
        $a_{n-1}^x=b_{n-1,n}+\frac{1}{x_1}\sum\limits_{j=2}^{n-1}(b_{n-1,j}-\sum\limits_{k_1+k_2+\dots+k_j=n-1}a^x_{k_1}a^x_{k_2}\dots a^x_{k_j})x_j+\frac{1}{x_1}(b_{n,n}-b_n^x)x_n$, \\
        $a_n^x=b_{n,1}+\frac{1}{x_1}(b_{n,n}-b_n^x)x_n$,\\
        where $b_{n-1}^x$ and $b_n^x$ are defined arbitrarily.
  \item If $x_1=0$ and $x_2\neq0$ then $(a_1^x)^2=b _{2,2}$,\\ $a_{i-1}^x=\frac{1}{2a_1^x} (b_{i,2}-\sum\limits_{k_1+k_2=i}^{k_1\neq1,k_2\neq1 }a^x_{k_1}a^x_{k_2} +\frac{1}{x_2}\sum\limits_{j=3}^i(b_{i,j}-\sum\limits_{k_1+k_2+\dots+k_j=i}a^x_{k_1}a^x_{k_2}\dots a^x_{k_j})x_j)$, where $3\leq i\leq n-2,$  $(a_1^x\neq0)$,\\
      $a_{n-2}^x=\frac{1}{2a_1^x} (b_{n-1,2}-\sum\limits_{k_1+k_2=n-1}^{k_1\neq1,k_2\neq1 }a^x_{k_1}a^x_{k_2} +\frac{1}{x_2}(\sum\limits_{j=3}^{n-1}(b_{n-1,j}-$\\
      $\sum\limits_{k_1+k_2+\dots+k_j=n-1}a^x_{k_1}a^x_{k_2}\dots a^x_{k_j})x_j+(b_{n-1,n}-b_{n-1}^x)x_n))$, \\
      $b_n^x=b_{n,n}$ where $a_{n}^x$, $a_{n-1}^x$ and $ b_{n-1}^x$ are defined arbitrarily.
  \item If $x_1=0, x_2=0$ and $x_3\neq0$ then $(a_1^x)^3=b _{3,3}$,\\ $a_{i-2}^x=\frac{1}{3(a_1^x)^2} (b_{i,3}-\sum\limits_{k_1+k_2+k_3=i}^{k_{l_1}+k_{l_2}\neq2 }a_{k_1}a_{k_2}a_{k_3} +\frac{1}{x_3}\sum\limits_{j=4}^i(b_{i,j}-$\\
      $\sum\limits_{k_1+k_2+\dots+k_j=i}a^x_{k_1}a^x_{k_2}\dots a^x_{k_j})x_j)$, where $4\leq i\leq n-2,$ $l_1,l_2\in\{1,2,3\},$ $(a_1^x\neq0)$,\\
      $a_{n-3}^x=\frac{1}{3(a_1^x)^2} (b_{n-1,3}-\sum\limits_{k_1+k_2+k_3=n-1}^{k_{l_1}+k_{l_2}\neq2 }a^x_{k_1}a^x_{k_2}a^x_{k_3}$\\ $+\frac{1}{x_3}(\sum\limits_{j=4}^{n-1}(b_{n-1,j}-\sum\limits_{k_1+k_2+\dots+k_j=n-1}a^x_{k_1}a^x_{k_2}\dots a^x_{k_j})x_j+(b_{n-1,n}-b_{n-1}^x)x_n))$ \\ where $l_1,l_2\in\{1,2,3\},$ $(a_1^x\neq0)$\\
      $b_n^x=b_{n,n}$ where $a_{n}^x$, $a_{n-1}^x$, $a_{n-2}^x$ and $ b_{n-1}^x$ are defined arbitrarily.
  \item If $x_1=x_2=\dots=x_{m-1}=0$ and $x_m\neq0$ then $(a_1^x)^m=b_{m,m}$,\\
  $a_{i-m+1}^x=\frac{1}{m(a_1^x)^{m-1}}\Big(b_{i,m}-\sum\limits_{k_1+k_2+\dots+k_m=i}^{k_{l_1}+k_{l_2}+\dots+k_{l_{m-1}}\neq m-1 }a^x_{k_1}a^x_{k_2}\dots a^x_{k_m}+\\
  +\frac{1}{x_k}\sum\limits_{j=m+1}^i\big(b_{i,j}-\sum\limits_{k_1+k_2+\dots+k_j=i}a^x_{k_1}a^x_{k_2}\dots a^x_{k_j})x_j\Big),$
  where $m\leq i\leq n-2,$ $l_1,l_2,\dots, l_{m-1}\in\{1,2,\dots,m\},$ $(a_1^x\neq0)$,\\
  $a_{n-m}^x=\frac{1}{m(a_1^x)^{m-1}}\Big(b_{n-1,k}-\sum\limits_{k_1+k_2+\dots+k_m=n-1}^{k_{l_1}+k_{l_2}+\dots+k_{l_{m-1}}\neq m-1 }a^x_{k_1}a^x_{k_2}\dots a^x_{k_m}+\\
  +\frac{1}{x_k}\sum\limits_{j=m+1}^{n-1}\big(b_{n-1,j}-\sum\limits_{k_1+k_2+\dots+k_j=n-1}a^x_{k_1}a^x_{k_2}\dots a^x_{k_j})x_j\Big),$
  \\ where $l_1,l_2,\dots, l_{m-1}\in\{1,2,\dots,m\},$ $(a_1^x\neq0)$.\\
  $b_n^x=b_{n,n}$ where $a_{n}^x$, $a_{n-1}^x$,$\dots $ , $a_{n-m+1}^x$ and $ b_{n-1}^x$ are defined arbitrarily.
\end{itemize}
Hence, the system of equation (3) always has a solution.
Therefore, the linear operator, defined by the matrix $A$ is
a local automorphism. The proof is complete.
\end{proof}

\begin{theorem} \label{1.2}
A linear map $\Phi$ is a local automorphism of $\mu_{1,2}$ if and only if the matrix of $\Phi$ has
the following lower triangular form
\[
\begin{pmatrix}
b_{1,1}&0&0&\dots&0&0\\
b_{2,1}&b_{2,2}&0&\dots&0&0\\
b_{3,1}&b_{3,2}&b_{3,3}&\dots&0&0\\
\vdots&\vdots&\vdots&\ddots&\vdots&\vdots\\
b_{n-1,1}&b_{n-1,2}&b_{n-1,3}&\dots&b_{n-1,n-1}&b_{n-1,n}\\
b_{n,1}& 0 & 0 &\dots& 0 &b_{n,n}\\
\end{pmatrix}
\]
\end{theorem}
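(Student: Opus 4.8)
The plan is to mirror the two-step scheme already used for Theorems \ref{0} and \ref{1.1}, substituting the automorphism normal form of Theorem \ref{3.3} for that of Theorem \ref{3.2}. First I would prove necessity: assuming $\Phi$ is a local automorphism with matrix $A=(b_{i,j})$, I evaluate the defining identity $\Phi(e_k)=\varphi_{e_k}(e_k)$ on each basis vector. Since $\varphi_{e_k}(e_k)$ is precisely the $k$-th column of some automorphism matrix $A_{e_k}$, this forces the $k$-th column of $A$ to coincide with a column of a matrix of the shape given in Theorem \ref{3.3}; reading off the zero pattern column by column then pins down the admissible form of $A$. Afterwards I would prove sufficiency: given a matrix $A$ of the prescribed form, I produce, for each $x$, an automorphism $\varphi_x$ with $\Phi(x)=\varphi_x(x)$ by solving the linear system $A\bar{x}=A_x\bar{x}$ for the automorphism parameters.

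For the necessity step the bookkeeping proceeds column by column. Column $1$ is $(a_1,\dots,a_n)^{T}$ with $a_1\neq0$, so it is unconstrained except for $b_{1,1}\neq0$. For $2\le j\le n-1$ the $j$-th column of an automorphism has its leading nonzero entry on the diagonal, equal to $a_1^{\,j}$, and vanishes in the last row; this yields $b_{i,j}=0$ for $i<j$ together with $b_{n,j}=0$, producing the lower-triangular block and the vanishing middle entries of the last row. The decisive column is the last one: by Theorem \ref{3.3} the image $\varphi(e_n)$ is supported on $e_{n-2},e_{n-1},e_n$, so matching column $n$ confines its nonzero entries to the rows in which $\varphi(e_n)$ is supported, in particular $b_{i,n}=0$ for $i\le n-3$ and $b_{n,n}=\sqrt{a_1^{\,n-1}}\neq0$ on the diagonal. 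Collecting these constraints produces the lower-triangular shape, the restricted last row, and a last column inheriting the support of $\varphi(e_n)$.

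For the sufficiency step I would, for each $x=\sum_i x_ie_i$, solve $A\bar{x}=A_x\bar{x}$ for $a_1,\dots,a_n$ and $b_{n-1}$ by the same case analysis as in the proofs of Theorems \ref{0} and \ref{1.1}, splitting on the index $m$ of the first nonzero coordinate of $x$. The $m$-th coordinate equation fixes $a_1$ through $a_1^{\,m}=b_{m,m}$ (and through $a_1^{\,n-1}=b_{n,n}^{2}$ when $m=n$), after which the remaining coordinate equations are solved in turn for the other parameters, the free parameter $b_{n-1}$ being used to satisfy the $(n-1)$-st equation.

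The step I expect to be hardest, and the point where the computation genuinely departs from the $\mu_{1,1}$ case, is reconciling the two occurrences of the parameter $a_n$. In $\mu_{1,2}$ the product $e_ne_n=e_{n-1}$ injects a term $a_n^{2}$ into the $(n-1)$-st coordinate equation, while the last column contributes $-a_n\sqrt{a_1^{\,n-3}}$ to the $(n-2)$-nd equation; thus I must first extract $a_n$ from the $n$-th equation and then substitute the same value consistently into the $(n-2)$-nd and $(n-1)$-st equations without creating a contradiction. The radicals $\sqrt{a_1^{\,n-3}}$ and $\sqrt{a_1^{\,n-1}}$ are meaningful because $\mathbb{F}$ is algebraically closed, but a coherent branch must be chosen throughout. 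Verifying that this simultaneous system is solvable for every $x$, in each case on $m$, is the technical core of the argument.
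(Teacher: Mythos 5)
Your proposal reproduces the paper's argument: necessity by evaluating $\Phi$ on the basis vectors and matching each column against the automorphism normal form of Theorem \ref{3.3}, and sufficiency by solving $A\bar{x}=A_x\bar{x}$ for the parameters $a_1^x,\dots,a_n^x,b_{n-1}^x$ with a case split on the first nonzero coordinate of $x$, exactly as in Theorems \ref{0} and \ref{1.1}, including the key point that $a_n^x$ is pinned down by the $n$-th equation and must then be fed consistently into the $(n-2)$-nd and $(n-1)$-st equations. Your reading of the last column as supported on rows $n-2$, $n-1$, $n$ (coming from the term $-a_n\sqrt{a_1^{n-3}}\,e_{n-2}$ in $\varphi(e_n)$) is the correct one and agrees with the system (4) in the paper's own proof, even though the matrix displayed in the statement and the phrase ``$b_{i,j}=0$, $i<j$, $i\neq n-1$'' in the paper's necessity step appear to suppress the admissible entry $b_{n-2,n}$.
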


\begin{proof}
Let $\Phi$ be an arbitrary local automorphism on $\mu_{1,2}$.
By the definition, for any element $x\in \mu_{1,2}$, there exists an automorphism $\varphi_x$ on $\mu_{1,2}$ such that
$$
\Phi(x)=\varphi_x(x).
$$
By theorem \ref{3.3}, the automorphism $\varphi_x$ has a matrix of the following form:
\begin{tiny}
\[
A_x=\begin{pmatrix}
        a_1^x&0&0&\dots&0&0\\
        a_2^x&(a_1^x)^2&0&\dots&0&0\\
        a_3^x&\sum\limits_{k_1+k_2=3}a^x_{k_1}a^x_{k_2}&(a_1^x)^3&\dots&0&0\\
        \vdots&\vdots&\vdots&\ddots&\vdots&\vdots\\
        a_{n-1}^x&\sum\limits_{k_1+k_2=n-1}a^x_{k_1}a^x_{k_2}+(a_n^x)^2&\sum\limits_{k_1+k_2+k_3=n-1}a^x_{k_1}a^x_{k_2}a^x_{k_3} &\dots&(a_1^x)^{n-1}&b^x_{n-1}\\
        a_n^x&0&0&\dots&0&\sqrt{(a_1^x)^{n-1}}
\end{pmatrix}.
\]
\end{tiny}

Let $A$ be the matrix of $\Phi$ and
\[
A=
\begin{pmatrix}
b_{1,1}&b_{1,2}&b_{1,3}&\dots&b_{1,n-1}&b_{1,n}\\
b_{2,1}&b_{2,2}&b_{2,3}&\dots&b_{2,n-1}&b_{2,n}\\
b_{3,1}&b_{3,2}&b_{3,3}&\dots&b_{3,n-1}&b_{3,n}\\
\vdots&\vdots&\vdots&\ddots&\vdots&\vdots\\
b_{n-1,1}&b_{n-1,2}&b_{n-1,3}&\dots&b_{n-1,n-1}&b_{n-1,n}\\
b_{n,1}&b_{n,2}&b_{n,3}&\dots&b_{n,n-1}&b_{n,n}\\
\end{pmatrix}.
\]

Then, by choosing subsequently $x=e_1$, $x=e_2$, $\ldots, x=e_n$ and using $\Phi(x)=\varphi_x(x)$, i.e. $A\bar{x}=A_x\bar{x}$,
where $\bar{x}=(x_1,x_2,\dots x_n)^T$ is the vector corresponding to $x=x_1e_1+\dots+x_ne_n$,
we have $b_{i,j}=0$, $i<j$, $i\neq n-1$, and $b_{k,k}\neq 0$, $1\leq k\leq n$, which implies
\[
A=\begin{pmatrix}
b_{1,1}&0&0&\dots&0&0\\
b_{2,1}&b_{2,2}&0&\dots&0&0\\
b_{3,1}&b_{3,2}&b_{3,3}&\dots&0&0\\
\vdots&\vdots&\vdots&\ddots&\vdots&\vdots\\
b_{n-1,1}&b_{n-1,2}&b_{n-1,3}&\dots&b_{n-1,n-1}&b_{n-1,n}\\
b_{n,1}&0&0&\dots&0&b_{n,n}\\
\end{pmatrix}\]

Similar to the proof of Theorem \ref{0} we prove that, for each element $x\in \mu_{1,2}$, the system of equations
\[
\left\{
\begin{array}{l}
b_{1,1}x_1=a_1^xx_1,\\
\sum\limits_{j=1}^i b_{i,j}x_j=a_i^xx_1+\sum\limits_{j=2}^{i}\sum\limits_{k_1+k_2+\dots+k_j=i}a^x_{k_1}a^x_{k_2}\dots a^x_{k_j}x_j,\quad 2\leq i\leq n-3,\\
\sum\limits_{j=1}^{n-2} b_{n-2,j}x_j+b_{n-2,n}x_n=\\ =a_{n-2}^xx_1+\sum\limits_{j=2}^{n-2}\sum\limits_{k_1+k_2+\dots+k_j=n-2}a^x_{k_1}a^x_{k_2}\dots a^x_{k_j}x_j-a_{n}^x\sqrt{(a^x_1)^{n-3}}x_n,\\
\sum\limits_{j=1}^{n} b_{n-1,j}x_j=a_{n-1}^xx_1+\sum\limits_{j=2}^{n-1}\sum\limits_{k_1+k_2+\dots+k_j=n-1}a^x_{k_1}a^x_{k_2}\dots a^x_{k_j}x_j+(a^x_n)^2x_2+b_{n-1}^xx_n,\\
b_{n,1}x_1+b_{n,n}x_n=a_n^xx_1+\sqrt{(a^x_1)^{n-1}}x_n,
\end{array}
\right.                     \eqno{(4)}
\]
obtained from the equality $A\bar{x}=A_x\bar{x}$, has a solution with respect to the variables
$$
a_1^x,\ a_2^x,\ \dots a_n^x, b_{n-1}^x.
$$

Let us consider the following cases
\begin{itemize}

    \item If $x_1\neq0$ then $a_1^x=b_{1,1}$,\\ $a_i^x=b_{i,1}+\frac{1}{x_1}\sum\limits_{j=2}^i(b_{i,j}-\sum\limits_{k_1+k_2+\dots+k_j=i}a^x_{k_1}a^x_{k_2}\dots a^x_{k_j})x_j$, where $2\leq i\leq n-3$,\\
        $a_{n-2}^x=b_{n-2,1}+\frac{1}{x_1}\sum\limits_{j=2}^{n-2}(b_{n-2,j}-\sum\limits_{k_1+k_2+\dots+k_j=n-2}a^x_{k_1}a^x_{k_2}\dots a^x_{k_j})x_j+\frac{1}{x_1}(b_{n-2,n}-a_n^x\sqrt{(a^x_1)^{n-3}})x_n$,where $a_n^x$ is defined arbitrarily, \\
        $a_{n-1}^x=b_{n-1,1}+\frac{1}{x_1}\sum\limits_{j=2}^{n-1}(b_{n-1,j}-\sum\limits_{k_1+k_2+\dots+k_j=n-1}a^x_{k_1}a^x_{k_2}\dots a^x_{k_j})x_j+\frac{1}{x_1}((b_{n-1,n}-b_{n-1}^x)x_n-(a_n^x)^2x_2)$,where $a_n^x$, $b_{n-1}^x$ are defined arbitrarily. \\
        $a_n^x=b_{n,1}+\frac{1}{x_1}(b_{n,n}-\sqrt{(a_1^x)^{n-1}})x_n$.\\

  \item If $x_1=0$ and $x_2\neq0$ then $(a_1^x)^2=b _{2,2}$,\\ $a_{i-1}^x=\frac{1}{2a_1^x} (b_{i,2}-\sum\limits_{k_1+k_2=i}^{k_1\neq1,k_2\neq1 }a^x_{k_1}a^x_{k_2} +\frac{1}{x_2}\sum\limits_{j=3}^i(b_{i,j}-\sum\limits_{k_1+k_2+\dots+k_j=i}a^x_{k_1}a^x_{k_2}\dots a^x_{k_j})x_j)$, where $3\leq i\leq n-3,$  $(a_1^x\neq0)$, \\
      $a_{n-3}^x=\frac{1}{2a_1^x} (b_{n-2,2}-\sum\limits_{k_1+k_2=n-2}^{k_1\neq1,k_2\neq1 }a^x_{k_1}a^x_{k_2} +\frac{1}{x_2}(\sum\limits_{j=3}^{n-2}(b_{n-2,j}-$\\$\sum\limits_{k_1+k_2+\dots+k_j=n-2}a^x_{k_1}a^x_{k_2}\dots a^x_{k_j})x_j+(b_{n-2,n}+a_n^x \sqrt{(a_1^x)^{n-3}})x_n))$, \\
      $a_{n-2}^x=\frac{1}{2a_1^x} (b_{n-1,2}-\sum\limits_{k_1+k_2=n-1}^{k_1\neq1,k_2\neq1 }a^x_{k_1}a^x_{k_2}-a_n^x +\frac{1}{x_2}(\sum\limits_{j=3}^{n-1}(b_{n-1,j}-$\\$\sum\limits_{k_1+k_2+\dots+k_j=n-1}a^x_{k_1}a^x_{k_2}\dots a^x_{k_j})x_j+(b_{n-1,n}-b_{n-1}^x)x_n ))$, \\
      $a^x_1=(b_{n,n})^{\frac{2}{n-1}}$.

  \item If $x_1=0, x_2=0$ and $x_3\neq0$ then $(a_1^x)^3=b _{3,3}$,\\ $a_{i-2}^x=\frac{1}{3(a_1^x)^2} (b_{i,3}-\sum\limits_{k_1+k_2+k_3=i}^{k_{l_1}+k_{l_2}\neq2 }a^x_{k_1}a^x_{k_2}a^x_{k_3} +\frac{1}{x_3}\sum\limits_{j=4}^i(b_{i,j}-$\\$\sum\limits_{k_1+k_2+\dots+k_j=i}a^x_{k_1}a^x_{k_2}\dots a^x_{k_j})x_j)$, where $4\leq i\leq n-3,$ $l_1,l_2\in\{1,2,3\},$ $(a_1^x\neq0)$,\\
      $a_{n-4}^x=\frac{1}{3(a_1^x)^2} (b_{n-2,3}-\sum\limits_{k_1+k_2+k_3=n-2}^{k_{l_1}+k_{l_2}\neq 2}a^x_{k_1}a^x_{k_2}a^x_{k_3}$\\ $+\frac{1}{x_3}(\sum\limits_{j=4}^{n-2}(b_{n-2,j}-\sum\limits_{k_1+k_2+\dots+k_j=n-2}a^x_{k_1}a^x_{k_2}\dots a^x_{k_j})x_j$\\
      $+(b_{n-2,n}+a_n^x \sqrt{(a_1^x)^{n-3}})x_n))$ where $l_1,l_2\in\{1,2,3\},$ $(a_1^x\neq0)$\\

  $a_{n-3}^x=\frac{1}{3(a_1^x)^2} (b_{n-1,3}-\sum\limits_{k_1+k_2+k_3=n-1}^{k_{l_1}+k_{l_2}\neq2 }a^x_{k_1}a^x_{k_2}a^x_{k_3}$\\ $+\frac{1}{x_3}(\sum\limits_{j=4}^{n-1}(b_{n-1,j}-\sum\limits_{k_1+k_2+\dots+k_j=n-1}a^x_{k_1}a^x_{k_2}\dots a^x_{k_j})x_j$\\
  $+(b_{n-1,n}-b_{n-1}^x)x_n))$ where $l_1,l_2\in\{1,2,3\},$ $(a_1^x\neq0)$\\
      $b_n^x=b_{n,n}$ where $a_{n}^x$, $a_{n-1}^x$, $a_{n-2}^x$ and $ b_{n-1}^x$ are defined arbitrarily.

  \item If $x_1=x_2=\dots=x_{m-1}=0$ and $x_m\neq0$ then $(a_1^x)^m=b_{m,m}$,\\
  $a_{i-m+1}^x=\frac{1}{m(a_1^x)^{m-1}}\Big(b_{i,m}-\sum\limits_{k_1+k_2+\dots+k_m=i}^{k_{l_1}+k_{l_2}+\dots+k_{l_{m-1}}\neq m-1 }a^x_{k_1}a^x_{k_2}\dots a^x_{k_m}+\\
  +\frac{1}{x_m}\sum\limits_{j=m+1}^i\big(b_{i,j}-\sum\limits_{k_1+k_2+\dots+k_j=i}a^x_{k_1}a^x_{k_2}\dots a^x_{k_j})x_j\Big),$
  \\ where $m+1\leq i\leq n-3,$ $l_1,l_2,\dots, l_{m-1}\in\{1,2,\dots,m\},$ $(a_1^x\neq0)$,\\
  $a_{n-m-1}^x=\frac{1}{m(a_1^x)^{m-1}}\Big(b_{n-2,m}-\sum\limits_{k_1+k_2+\dots+k_m=n-2}^{k_{l_1}+k_{l_2}+\dots+k_{l_{m-1}}\neq m-1 }a^x_{k_1}a^x_{k_2}\dots a^x_{k_m}+\\
  +\frac{1}{x_m}((\sum\limits_{j=m+1}^{n-2}\big(b_{n-2,j}-\sum\limits_{k_1+k_2+\dots+k_j=n-2}a^x_{k_1}a^x_{k_2}\dots a^x_{k_j})x_j+(b_{n-2,n}+$\\$a_n^x \sqrt{(a_1^x)^{n-3}})x_n)\Big),$
  where $l_1,l_2,\dots, l_{m-1}\in\{1,2,\dots,m\},$ $(a_1^x\neq0)$.\\

  $a_{n-m}^x=\frac{1}{m(a_1^x)^{m-1}}\Big(b_{n-1,m}-\sum\limits_{k_1+k_2+\dots+k_m=n-1}^{k_{l_1}+k_{l_2}+\dots+k_{l_{m-1}}\neq m-1 }a^x_{k_1}a^x_{k_2}\dots a^x_{k_m}+\\
  +\frac{1}{x_m}((\sum\limits_{j=m+1}^{n-1}\big(b_{n-1,j}-\sum\limits_{k_1+k_2+\dots+k_j=n-1}a^x_{k_1}a^x_{k_2}\dots a^x_{k_j})x_j+(b_{n-1,n}-b_{n-1}^x)x_n)\Big),$
  $b_n^x=b_{n,n}$ where $a_{n}^x$, $a_{n-1}^x$,$\dots $ , $a_{n-m+1}^x$ and $ b_{n-1}^x$ are defined arbitrarily.
\end{itemize}
Hence, the system of equation (4) always has a solution.
Therefore, the linear operator, defined by the matrix $A$ is
a local automorphism. This ends the proof.
\end{proof}

We can similarly prove the following theorems using theorems \ref{3.4} and \ref{3.5}.

\begin{theorem} \label{1.3}
A linear map $\Phi$ is a local automorphism of $\mu_{1,3}$ if and only if the matrix of $\Phi$ has
the following lower triangular form
\[
\begin{pmatrix}
b_{1,1}&0&0&\dots&0&0\\
b_{2,1}&b_{2,2}&0&\dots&0&0\\
b_{3,1}&b_{3,2}&b_{3,3}&\dots&0&0\\
\vdots&\vdots&\vdots&\ddots&\vdots&\vdots\\
b_{n-1,1}&b_{n-1,2}&b_{n-1,3}&\dots&b_{n-1,n-1}&b_{n-1,n}\\
b_{n,1}& 0 & 0 &\dots& 0 &b_{n,n}\\
\end{pmatrix}
\]
\end{theorem}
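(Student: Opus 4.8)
The plan is to follow the template already established in the proofs of Theorems \ref{0}, \ref{1.1}, and \ref{1.2}, now feeding in the description of the automorphism group of $\mu_{1,3}$ supplied by Theorem \ref{3.4}. First I would record the matrix $A_x$ of an arbitrary automorphism $\varphi_x$ by reading off its columns from Theorem \ref{3.4}: columns $2,\dots,n-1$ carry the usual power-sum entries $\sum_{k_1+\dots+k_j=i}a^x_{k_1}\cdots a^x_{k_j}$ on and below the diagonal, with the single extra term $a_1^x a_n^x$ sitting in position $(n-1,2)$, while the last column has only two nonzero entries, namely $b^x_{n-1}$ in row $n-1$ and $(a_1^x)^{n-2}$ in row $n$. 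This support pattern of the last column is precisely what distinguishes $\mu_{1,3}$ from $\mu_{1,2}$, whose $n$-th column also loaded row $n-2$; it is the reason the resulting local-automorphism matrix coincides with the one obtained for $\mu_{1,1}$.

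For the necessity direction I would substitute $x=e_1,e_2,\dots,e_n$ in turn into the defining identity $\Phi(x)=\varphi_x(x)$, that is, into $A\bar x = A_x\bar x$. Substituting $x=e_j$ equates the $j$-th column of $A$ with the $j$-th column of some automorphism matrix $A_{e_j}$. For $2\le j\le n-1$ that column is supported only on rows $j,\dots,n-1$, which forces $b_{i,j}=0$ for $i<j$ and $b_{n,j}=0$; substituting $x=e_n$ and using that the last column of $A_{e_n}$ is supported only on rows $n-1$ and $n$ forces $b_{i,n}=0$ for $i\le n-2$. The diagonal entries are nonzero because $b_{k,k}$ equals the $(k,k)$-entry of $A_{e_k}$, namely $(a_1^{e_k})^k$ for $k\le n-1$ and $(a_1^{e_n})^{n-2}$ for $k=n$, all nonzero since $a_1^x\neq 0$. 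This yields exactly the claimed lower-triangular shape, with $b_{n-1,n}$ the only admissible entry above the diagonal.

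For the sufficiency direction I would fix a matrix $A$ of the stated form and, for each $x=\sum_i x_ie_i$, exhibit automorphism parameters $a_1^x,\dots,a_n^x,b_{n-1}^x$ solving $A\bar x = A_x\bar x$. As in Theorem \ref{1.2}, I would split into cases according to the first nonzero coordinate $x_m$ of $x$: the diagonal relation in row $m$ fixes $(a_1^x)^m=b_{m,m}$, extracting the root using that $\mathbb{F}$ is algebraically closed, after which the remaining $a_i^x$ are found by back-substitution down the column, each new unknown entering linearly with the nonzero coefficient $m(a_1^x)^{m-1}$. The contributions of the $n$-th column are absorbed by the free parameters: $b_{n-1}^x$ soaks up the $(n-1,n)$ term in the row-$(n-1)$ equation, and when $x_1\neq 0$ the free parameter $a_n^x$ absorbs the discrepancy $(b_{n,n}-(a_1^x)^{n-2})x_n$ arising in the last row.

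The step I expect to be the main obstacle is the bookkeeping in the last two rows when the leading nonzero coordinate is $x_m$ with $m\ge 2$ while simultaneously $x_n\neq 0$. In that regime the coefficient $x_1$ of $a_n^x$ in the row-$n$ equation vanishes, so $a_n^x$ no longer helps there; row $n$ then independently demands $(a_1^x)^{n-2}=b_{n,n}$ whereas row $m$ demands $(a_1^x)^m=b_{m,m}$, and threading the extra $a_1^x a_n^x$ term of column $2$ correctly through the row-$(n-1)$ equation must be done so that these determinations of $a_1^x$ are reconciled. Verifying that the prescribed triangular form of $A$ makes this compatible in every case is the delicate part; once it is checked, the remainder is the routine triangular back-substitution already illustrated for $\mu_{1,1}$ and $\mu_{1,2}$.
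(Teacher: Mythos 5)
Your overall strategy is the one the paper intends: Theorem \ref{1.3} is only asserted to follow ``similarly'' from Theorem \ref{3.4}, and you reproduce that template faithfully. Your necessity argument is sound: substituting $x=e_j$ matches column $j$ of $A$ with column $j$ of some automorphism matrix of $\mu_{1,3}$, which forces $b_{i,j}=0$ for $i<j$ except for the $(n-1,n)$ slot, forces $b_{n,j}=0$ for $2\le j\le n-1$, and makes the diagonal entries nonzero. Your reading of the automorphism matrix from Theorem \ref{3.4} (the extra $a_1^xa_n^x$ in position $(n-1,2)$, last column supported on rows $n-1$ and $n$ with entries $b_{n-1}^x$ and $(a_1^x)^{n-2}$) is also correct.

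The genuine gap is in the sufficiency direction, at exactly the step you flag and then postpone. You say the delicate part is ``verifying that the prescribed triangular form of $A$ makes this compatible in every case''; it does not. Take $x=e_2+e_n$, so $x_1=0$, $x_2\neq0$, $x_n\neq0$. The row-$2$ equation of $A\bar x=A_x\bar x$ gives $(a_1^x)^2=b_{2,2}$, while the row-$n$ equation gives $(a_1^x)^{n-2}=b_{n,n}$, because the $(n,n)$-entry of an automorphism matrix of $\mu_{1,3}$ is $(a_1^x)^{n-2}$, rigidly tied to $a_1^x$, and the only other nonzero entry of row $n$, namely $a_n^x$, is multiplied by $x_1=0$. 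For generic nonzero $b_{2,2}$ and $b_{n,n}$ these two determinations of $a_1^x$ are inconsistent (already for $n=4$ they force $b_{2,2}=b_{4,4}$), so no automorphism $\varphi_x$ exists and a matrix of the stated form need not be a local automorphism. The compatibility you hoped to check is therefore an additional polynomial constraint tying the diagonal entries $b_{m,m}$, $2\le m\le n-1$, to $b_{n,n}$, and neither the free parameter $b_{n-1}^x$ nor the $(n-1,2)$ term $a_1^xa_n^x$ can absorb it, since the conflict lives entirely in rows $2$ and $n$. This defect is inherited from the template you are copying: the printed proof of Theorem \ref{1.2} makes the same double determination of $a_1^x$ in the case $x_1=0$, $x_2\neq0$ (setting $(a_1^x)^2=b_{2,2}$ and $a_1^x=(b_{n,n})^{2/(n-1)}$ simultaneously). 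The reason the argument genuinely closes for $\mu_{1,1}$ but not here is that for $\mu_{1,1}$ the $(n,n)$-entry $b_n^x$ of an automorphism is a free parameter, whereas for $\mu_{1,3}$ it is a power of $a_1^x$. To complete a correct proof you would have to either add the resulting relations among the diagonal entries to the statement or restrict the case analysis accordingly.
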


\begin{theorem} \label{1.4}
A linear map $\Phi$ is a local automorphism of $\mu_{1,4}$ if and only if the matrix of $\Phi$ has
the following lower triangular form
\[
\begin{pmatrix}
b_{1,1}&0&0&\dots&0&0\\
b_{2,1}&b_{2,2}&0&\dots&0&0\\
b_{3,1}&b_{3,2}&b_{3,3}&\dots&0&0\\
\vdots&\vdots&\vdots&\ddots&\vdots&\vdots\\
b_{n-1,1}&b_{n-1,2}&b_{n-1,3}&\dots&b_{n-1,n-1}&b_{n-1,n}\\
b_{n,1}& 0 & 0 &\dots& 0 & 1\\
\end{pmatrix}
\]
\end{theorem}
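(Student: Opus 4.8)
The plan is to follow the same two-step scheme used for Theorems \ref{0}, \ref{1.1} and \ref{1.2}, now feeding in the automorphism description of $\mu_{1,4}$ from Theorem \ref{3.5}. First I would record the general automorphism matrix $A_x$ of $\mu_{1,4}$ in the basis $\{e_1,\dots,e_n\}$, where the $j$-th column is $\varphi_x(e_j)$. Since $a_1=1$ for every automorphism, the whole diagonal of $A_x$ is pinned to $1$: the $(k,k)$ entry is $(a_1^x)^k=1$ for $k\le n-1$, and the $(n,n)$ entry is the explicit $1$ coming from $\varphi(e_n)=-a_n e_{n-2}+b_{n-1}e_{n-1}+e_n$. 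The relations $e_1e_n=e_ne_n=e_{n-1}$ are what push $A_x$ off the purely lower-triangular shape: they create the entry $(n-2,n)=-a_n^x$, the correction $a_n^x+(a_n^x)^2$ added to the $(n-1,2)$ entry, and the free entry $(n-1,n)=b_{n-1}^x$, while the last row of $A_x$ is $(a_n^x,0,\dots,0,1)$.

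For the forward direction I would let $A=(b_{i,j})$ be the matrix of a local automorphism $\Phi$ and substitute $x=e_1,e_2,\dots,e_n$ into $A\bar x=A_x\bar x$. Because the $k$-th column of $A$ must coincide with the $k$-th column of some automorphism matrix, the vanishing pattern of the columns of $A_x$ forces $b_{i,j}=0$ for $i<j$ with $i\ne n-1$, forces $b_{n,j}=0$ for $2\le j\le n-1$, and forces the $(n,n)$ entry to equal $1$ (with the interior diagonal entries pinned to the unit values $(a_1^x)^k=1$); this is exactly the displayed shape. I expect this half to be routine bookkeeping, identical in spirit to the earlier theorems.

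The substance is the converse: starting from an $A$ of the stated form, I must produce, for every $x=\sum x_ie_i$, an automorphism $\varphi_x$ with $\Phi(x)=\varphi_x(x)$, i.e. solve the linear system obtained from $A\bar x=A_x\bar x$ for the unknowns $a_2^x,\dots,a_n^x,b_{n-1}^x$ with $a_1^x=1$ held fixed. I would organize the solution by the index $m$ of the first nonvanishing coordinate of $x$. When $x_1\ne 0$ the last equation reads $b_{n,1}x_1=a_n^xx_1$, so $a_n^x=b_{n,1}$ is determined first; then rows $2,\dots,n-1$ are triangular in $a_2^x,\dots,a_{n-1}^x$ and can be solved one at a time after dividing by $x_1$, with $b_{n-1}^x$ left free to absorb the $(n-1,n)$ contribution. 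When the first nonzero coordinate is $x_m$ with $m\ge 2$, the parameter $a_n^x$ becomes free, the leading equation fixes the diagonal (consistently, since $a_1^x=1$), and the remaining rows are again solved recursively after dividing by $x_m$.

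The hard part will be bookkeeping the coupling created by $e_1e_n=e_ne_n=e_{n-1}$: the single scalar $a_n^x$ enters the row-$n$ equation, the row-$(n-2)$ equation through $-a_n^x$, and the row-$(n-1)$ equation through $a_n^x+(a_n^x)^2$ together with the $(n-1,n)$ term $b_{n-1}^x$, so I must check that the value of $a_n^x$ forced by row $n$ is compatible with rows $n-2$ and $n-1$, using the freedom in $b_{n-1}^x$ and in $a_{n-1}^x$ to clear the resulting discrepancy. The other delicate feature, absent for $\mu_0$ and $\mu_{1,1}$, is that $a_1=1$ removes the scaling freedom: there is no free $a_1^x$ available to match the diagonal, so the solvability in each case rests on the diagonal entries being exactly the unit values dictated by $(a_1^x)^k=1$. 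Once these compatibility checks go through in every case, the operator defined by $A$ is a local automorphism, completing the proof.
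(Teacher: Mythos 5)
Your two-step scheme is exactly the one the paper intends (it only says Theorem \ref{1.4} is proved ``similarly'' to Theorems \ref{1.1} and \ref{1.2} using Theorem \ref{3.5}), and your description of $A_x$ is accurate; the problem is that your forward direction contradicts itself and, with it, the displayed matrix form. You correctly record that column $n$ of $A_x$ is $(0,\dots,0,-a_n^x,b_{n-1}^x,1)^T$, with a generally nonzero entry in position $n-2$, yet you then claim the basis-vector test forces $b_{i,j}=0$ for all $i<j$ with $i\ne n-1$, which includes $b_{n-2,n}=0$. Taking $x=e_n$ gives $\Phi(e_n)=\varphi_{e_n}(e_n)=-a_n^{e_n}e_{n-2}+b_{n-1}^{e_n}e_{n-1}+e_n$, so $b_{n-2,n}=-a_n^{e_n}$ is unconstrained; indeed any automorphism of $\mu_{1,4}$ with $a_n\neq 0$ is itself a local automorphism whose matrix has a nonzero $(n-2,n)$ entry. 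So the ``only if'' half cannot produce the stated pattern, and no amount of bookkeeping in the converse repairs this.

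The second problem is the diagonal. You correctly observe that $a_1=1$ pins every diagonal entry of $A_x$ to $1$, and your own converse argument tacitly needs $b_{m,m}=1$ for the leading equation in the case ``$x_1=\dots=x_{m-1}=0$, $x_m\neq 0$'' to be solvable (there is no free $a_1^x$ to match $b_{m,m}$). But the theorem's displayed matrix leaves $b_{1,1},\dots,b_{n-1,n-1}$ as free parameters, so your argument proves neither direction of the statement as written: the forward direction actually yields the stronger conclusion $b_{k,k}=1$ for every $k$, and the backward direction fails for any matrix of the displayed shape with some $b_{k,k}\neq 1$. To get a correct proof you must first correct the target: unit diagonal throughout, arbitrary strictly lower-triangular entries in rows $1,\dots,n-1$, arbitrary entries in positions $(n-2,n)$ and $(n-1,n)$, and last row $(b_{n,1},0,\dots,0,1)$. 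Against that form your case analysis (with $a_n^x=b_{n,1}$ when $x_1\neq 0$, $a_n^x$ free otherwise, and $b_{n-1}^x$ absorbing the row-$(n-1)$ discrepancy) does go through; against the form actually displayed it does not.
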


\begin{remark}
Note that the common form of the matrix of a local automorphism
on an algebra includes the common form of the matrix of an automorphism on this algebra.
The coincidence of these common forms denotes that every local
automorphism of the considering algebra is an automorphism.
But the common form of the matrix
of an automorphism on the associative algebras $\mu_{0}$, $\mu_{1,1}$, $\mu_{1,2}$, $\mu_{1,3}$ and $\mu_{1,4}$
does not coincide with the common form of the matrix of a local automorphism on these algebras by
the appropriate theorems and theorems. Therefore, the associative algebras
$\mu_{0}$, $\mu_{1,1}$, $\mu_{1,2}$, $\mu_{1,3}$ and $\mu_{1,4}$ have local
automorphisms that are not automorphisms.

Also, note that local automorphisms of an arbitrary low-dimension algebra can be similarly described using
a common form of the matrix of automorphisms on this algebra. A technique for constructing a local automorphism,
which is not an automorphism, developed by us, can be applied to an arbitrary low-dimension algebra,
automorphisms of which have a matrix of a common form.
\end{remark}

\section{Description of 2-local automorphisms of finite-dimensional null-filiform and filiform associative algebras}

\begin{theorem} \label{5.1}
Each 2-local automorphism of $\mu_0$ is an automorphism.
\end{theorem}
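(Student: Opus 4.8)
The plan is to exploit the structural fact, visible directly from Theorem \ref{3.1}, that an automorphism of $\mu_0$ is completely determined by its value on the single generator $e_1$. Let me write $\Delta$ for the given 2-local automorphism, so that for every pair $x,y\in\mu_0$ there is an automorphism $\varphi_{x,y}$ with $\Delta(x)=\varphi_{x,y}(x)$ and $\Delta(y)=\varphi_{x,y}(y)$. First I would apply this property to the pair $(e_1,e_1)$: it yields an automorphism $\varphi$ with $\Delta(e_1)=\varphi(e_1)$, so $\Delta(e_1)=\sum_{i=1}^n a_i e_i$ with $a_1\neq 0$. By Theorem \ref{3.1} there is then a genuine automorphism $\varphi_0$ whose defining coefficients in formula $(1)$ are exactly these $a_1,\dots,a_n$, and it satisfies $\varphi_0(e_1)=\Delta(e_1)$.

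The key step is a uniqueness observation: if two automorphisms of $\mu_0$ agree on $e_1$, then they are equal. This is immediate from Theorem \ref{3.1}, because the value $\varphi(e_1)=\sum_{i=1}^n a_i e_i$ fixes all the coefficients $a_1,\dots,a_n$, and formula $(1)$ then forces the value of $\varphi$ on every basis vector $e_i$, hence on all of $\mu_0$. Thus an automorphism of $\mu_0$ is nothing more than a choice of its image of $e_1$ (subject to $a_1\neq 0$).

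With this in hand I would prove $\Delta=\varphi_0$ pointwise. Fix an arbitrary $x\in\mu_0$ and apply the 2-local property to the pair $(e_1,x)$: there is an automorphism $\psi$ with $\psi(e_1)=\Delta(e_1)$ and $\psi(x)=\Delta(x)$. The first equality gives $\psi(e_1)=\varphi_0(e_1)$, so the uniqueness step forces $\psi=\varphi_0$, whence $\Delta(x)=\psi(x)=\varphi_0(x)$. Since $x$ is arbitrary, $\Delta=\varphi_0$, and therefore $\Delta$ is an automorphism. Note that linearity and multiplicativity of $\Delta$ are never assumed and come for free, since $\Delta$ is shown to coincide with the concrete automorphism $\varphi_0$.

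The argument is short precisely because $\mu_0$ is singly generated by $e_1$, so there is no genuine analytic difficulty; the one point that must be handled with care is the uniqueness step, which is where Theorem \ref{3.1} does all the work by expressing $\varphi(e_i)$ through the coefficients of $\varphi(e_1)$. The hardest part is really just recognizing that this single-generator phenomenon collapses the whole problem to one evaluation at $e_1$. The same scheme adapts to the filiform algebras $\mu_{1,k}$, with the caveat that there the action on $e_1$ no longer determines the automorphism on the $\langle e_{n-1},e_n\rangle$ block; one would then supplement the pair $(e_1,x)$ with a second pair such as $(e_n,x)$ to pin down the remaining coordinates before repeating the comparison.
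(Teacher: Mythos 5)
Your proposal is correct and is essentially the paper's own argument: the paper likewise applies the 2-local property to pairs containing $e_1$, uses Theorem \ref{3.1} to conclude that the matrix of $\varphi_{x,e_1}$ is determined by its first column (i.e.\ by the value on $e_1$), and deduces that $\phi$ coincides with a single fixed automorphism. Even your closing remark about supplementing with the pair $(e_n,x)$ for the filiform cases matches what the paper does in Theorem \ref{5.2}.
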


\begin{proof}
\quad Let $\phi$ be an arbitrary 2 -local automorphism of $\mu_0$. Then, by the definition, for every element $x\in \mu_0$,
\[x=\sum\limits_{i=1}^n x_i e_i,\] \\
there exist a matrix $A_{x,e_1}$
\begin{tiny}
\[
A_{x,e_1}=
\begin{pmatrix}
        a_1^{x,e_1}&0&\dots&0&0\\
        a_2^{x,e_1}&(a_1^{x,e_1})^2&\dots&0&0\\
        \vdots&\vdots&\ddots&\vdots&\vdots\\
         a_{n-1}^{x,e_1}&\sum\limits_{k_1+k_2=n-1}a^{x,e_1}_{k_1}a^{x,e_1}_{k_2}&\dots&(a_1^{x,e_1})^{n-1}&0\\
         a_n^{x,e_1}&\sum\limits_{k_1+k_2=n}a^{x,e_1}_{k_1}a^{x,e_1}_{k_2}&\dots&\sum\limits_{{k_1+k_2+\dots+k_{n-1}}=n-1} a^{x,e_1}_{k_1}a^{x,e_1}_{k_2}\dots a^{x,e_1}_{k_{n-1}}&(a^{x,e_1}_1)^n
\end{pmatrix}
,\]
\end{tiny}

such that $\phi(x)=\widehat{A_{x,e_1} \bar{x}}$, where $\bar{x} = (x_1, x_2,\dots, x_n)^T$ is the vector corresponding
to $x$, $\widehat{\bar{x}}$ is an operation on $\bar{x}$ such that $\widehat{\bar{x}}=x$, and
\[
\phi(e_1)=\widehat{A_{x,e_1}\overline{e_1}}=
\widehat{(a_1^{x,e_1},a_2^{x,e_1},a_3^{x,e_1},\dots,a_1^{x,e_1})^T}.
\]

Since $\phi(e_1) = \varphi_{x,e_1}(e_1) = \varphi_{y,e_1}(e_1)$, we have
\[
\phi(e_1)=\widehat{(a_1^{x,e_1},a_2^{x,e_1},a_3^{x,e_1},\dots,a_n^{x,e_1})^T}=
\]
\[
=\widehat{(a_1^{y,e_1},a_2^{y,e_1},a_3^{y,e_1},\dots,a_n^{y,e_1})^T}
\]
for each pair, $x$, $y$ of elements in $\mu_0$. Hence, $a_k^{x,e_1}=a_k^{y,e_1}$, $k=1,2,\dots n$. Therefore
\[
\phi(x)=\widehat{A_{y,e_1}\bar{x}}
\] \\
for any $x\in \mu_0$, and the matrix of $\phi(x)$ does not depend on $x$.
Hence $\phi$ is a linear operator, and the matrix of $\varphi_{y,e_1}$ is the matrix of $\phi$.
Thus, by Proposition \ref{3.1}, $\phi$ is an automorphism.
\end{proof}

\begin{theorem} \label{5.2}
Each 2-local automorphism of $\mu_{1,1}$ is an automorphism.
\end{theorem}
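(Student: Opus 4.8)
The plan is to follow the strategy of Theorem \ref{5.1}, but to account for the fact that, by Theorem \ref{3.2}, an automorphism of $\mu_{1,1}$ is no longer determined by its value on the single generator $e_1$: the coefficients $a_1,\dots,a_n$ (the ``$a$-part'') are read off from $\varphi(e_1)$, whereas the coefficients $b_{n-1},b_n$ (the ``$b$-part'') are read off from $\varphi(e_n)$. Since $e_n$ lies in the annihilator and is not a product of other basis vectors, the pair $\{e_1,e_n\}$ generates $\mu_{1,1}$, and an automorphism is determined precisely by the pair $(\varphi(e_1),\varphi(e_n))$. Accordingly, I would first record two preliminary facts. Applying the $2$-local hypothesis to the pair $(x,\lambda x)$ and using linearity of the interpolating automorphism gives $\phi(\lambda x)=\lambda\phi(x)$, so $\phi$ is homogeneous. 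Applying it to the pair $(e_1,e_n)$ produces a single automorphism $\psi:=\varphi_{e_1,e_n}$ with $\psi(e_1)=\phi(e_1)$ and $\psi(e_n)=\phi(e_n)$; this $\psi$ is the candidate automorphism, and the goal is to prove $\phi=\psi$.

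Next I would fix an arbitrary $x=\sum_{i=1}^n x_ie_i$ and extract $\phi(x)$ from two different anchors. Using the pair $(x,e_1)$, the first column of the matrix of $\varphi_{x,e_1}$ equals $\phi(e_1)=\psi(e_1)$; by Theorem \ref{3.2} the first $n-1$ columns of an automorphism are completely determined by this first column, so $\varphi_{x,e_1}(e_i)=\psi(e_i)$ for $1\le i\le n-1$, and hence $\phi(x)-\psi(x)=x_n\big(\varphi_{x,e_1}(e_n)-\psi(e_n)\big)\in x_n\cdot\mathrm{span}(e_{n-1},e_n)$. Using instead the pair $(x,e_n)$, the last column is fixed, $\varphi_{x,e_n}(e_n)=\psi(e_n)$, which pins the $e_n$-coordinate of $\phi(x)$ to that of $\psi(x)$ (the images $\varphi_{x,e_n}(e_i)$, $i\le n-1$, carry no $e_n$-component). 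Comparing the two descriptions removes the $e_n$-discrepancy, leaving
\[
\phi(x)=\psi(x)+c(x)\,e_{n-1}
\]
for a homogeneous scalar function $c$ that vanishes whenever $x_n=0$ (then $x\in\mathrm{span}(e_1,\dots,e_{n-1})$ and both anchors already agree) and at $x=e_n$.

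The crux — and the step I expect to be the main obstacle — is to prove $c\equiv 0$. This is exactly the coordinate that Theorem \ref{1.1} singled out as free for \emph{local} automorphisms (the entry $b_{n-1,n}$), so it is the one place where being $2$-local must do strictly more than being local. The two anchors $e_1$ and $e_n$ control complementary halves of the matrix and each leaves the $e_{n-1}$-image of the \emph{other} generator free, so the naive comparison does not by itself force $c=0$; one genuinely has to play the two generators against each other. The plan is to feed the relation $\phi(x)=\psi(x)+c(x)e_{n-1}$ back into the $2$-local hypothesis for carefully chosen pairs: pairs $(x,y)$ whose $(e_1,e_n)$-coordinates are proportional (together with homogeneity) to constrain how $c$ may depend on $x_1$ and $x_n$, and pairs anchored against $e_n$ to force $c$ to vanish on the hyperplane $x_1=0$ as well. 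The delicate point, which must be handled with real care, is that for any \emph{single} pair the interpolating automorphism already has enough freedom in its two ``junction'' parameters (the $e_{n-1}$-coefficients of the images of $e_1$ and of $e_n$) to realize the prescribed values of $c$ at both points; hence the vanishing of $c$ can only be forced by the compatibility of these choices across a sufficiently rich family of pairs, and this reconciliation on the region $x_1\neq 0,\ x_n\neq 0$ is where the argument is hardest. Once $c\equiv 0$ is established, $\phi=\psi$ on all of $\mu_{1,1}$, and $\psi$ is an automorphism by Theorem \ref{3.2}, which completes the proof.
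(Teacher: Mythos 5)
Your reduction is organized the right way, and up to the identity $\phi(x)=\psi(x)+c(x)e_{n-1}$ you are doing, more carefully, what the paper does: the paper likewise anchors at $e_1$ to freeze the coefficients $a_1,\dots,a_n$ of every $\varphi_{x,e_1}$ and at $e_n$ to freeze $b_{n-1},b_n$ of every $\varphi_{e_n,x}$. (One small slip on the way: $\varphi_{x,e_n}(e_1)=\sum_i a_i^{x,e_n}e_i$ \emph{does} carry an $e_n$-component, namely $a_n^{x,e_n}$, so the pair $(x,e_n)$ does not by itself pin the $e_n$-coordinate of $\phi(x)$ when $x_1\neq0$; comparing the two anchors only yields $x_n(b_n^{x,e_1}-b_n)=x_1(a_n^{x,e_n}-a_n)$, so a priori there is a second discrepancy function along $e_n$ as well.) The genuine gap is the one you flag yourself: the assertion $c\equiv 0$ is never proved, only a plan for proving it is described, and that assertion is the entire content of the theorem beyond Theorem~\ref{1.1}. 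Nor does the paper contain an argument you could import here: its proof simply writes $\phi(x)=\varphi_{x,e_1}(x)=\varphi_{e_n,x}(x)=\varphi_{e_1,e_n}(x)$, i.e.\ it silently identifies $\varphi_{x,e_1}(x)$ with $\varphi_{e_1,e_n}(x)$, which is exactly the term $x_n\bigl((b_{n-1}^{x,e_1}-b_{n-1})e_{n-1}+(b_n^{x,e_1}-b_n)e_n\bigr)$ that you isolate and that the paper never shows to vanish.

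Worse, the obstacle you describe (``for any single pair the interpolating automorphism already has enough freedom in its two junction parameters to realize the prescribed values of $c$ at both points'') is not merely a difficulty of the method --- it defeats the intended conclusion. Take $n=4$, $\psi=\mathrm{id}$, and set $c(x)=x_4^2/x_1$ for $x_1\neq0$ and $c(x)=0$ for $x_1=0$, so $\phi(x)=x+c(x)e_3$. For a pair $x,y$ with $x_1,y_1\neq0$ the coordinate equations force $a_1=1$, $a_2=0$, allow $a_4=0$, $b_4=1$, and reduce to the linear system $a_3z_1+b_3z_4=c(z)$, $z\in\{x,y\}$, which is solvable whenever $(x_1,x_4)$ and $(y_1,y_4)$ are independent and, in the dependent case, is consistent precisely because $c$ is homogeneous of degree one and depends only on $(x_1,x_4)$; if $y_1=0$ the equations for $y$ force at most $b_3=0$ and $b_4=1$, after which $a_3$ absorbs $c(x)$; if $x_1=y_1=0$ the identity works. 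So $\phi$ is a $2$-local automorphism of $\mu_{1,1}$ that is not additive, and the analogous $c(x)=x_n^2/x_1$ works for every $n$. Consequently no family of ``carefully chosen pairs'' can force $c\equiv0$: the step you postponed cannot be completed, and the reconciliation you were hoping for does not exist.
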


\begin{proof}
\quad Let $\phi$ be an arbitrary 2 -local automorphism of $\mu_{1,1}$. Then, by the definition, for every element $x\in \mu_{1,1}$,
\[x=\sum\limits_{i=1}^n x_i e_i,\] \\
there exist a matrix $A_{x,e_1}$
\begin{tiny}
\[
A_{x,e_1}=
\begin{pmatrix}
        a_1^{x,e_1}&0&0&\dots&0&0\\
        a_2^{x,e_1}&(a_1^{x,e_1})^2&0&\dots&0&0\\
        a_3^{x,e_1}&\sum\limits_{k_1+k_2=3}a^{x,e_1}_{k_1}a^{x,e_1}_{k_2}&(a_1^{x,e_1})^3&\dots&0&0\\
        \vdots&\vdots&\vdots&\ddots&\vdots&\vdots\\
         a_{n-1}^{x,e_1}&\sum\limits_{k_1+k_2=n-1}a^{x,e_1}_{k_1}a^{x,e_1}_{k_2}&\sum\limits_{k_1+k_2+k_3=n-1}a^{x,e_1}_{k_1}a^{x,e_1}_{k_2}a^{x,e_1}_{k_3} &\dots&(a_1^{x,e_1})^{n-1}&b^{x,e_1}_{n-1}\\
         a_n^{x,e_1}&0&0&\dots&0&b^{x,e_1}_n
\end{pmatrix},\]
\end{tiny}

such that $\phi(x)=\widehat{A_{x,e_1} \bar{x}}$, where $\bar{x} = (x_1, x_2,\dots, x_n)^T$ is the vector corresponding
to $x$, $\widehat{\bar{x}}$ is an operation on $\bar{x}$ such that $\widehat{\bar{x}}=x$, and
\[
\phi(e_1)=\widehat{A_{x,e_1}\overline{e_1}}=
\widehat{(a_1^{x,e_1},a_2^{x,e_1},a_3^{x,e_1},\dots,a_n^{x,e_1})^T}.
\]

Since $\phi(e_1) = \varphi_{x,e_1}(e_1) = \varphi_{y_1,e_1}(e_1)$, we have
\[
\phi(e_1)=\widehat{(a_1^{x,e_1},a_2^{x,e_1},a_3^{x,e_1},\dots,a_n^{x,e_1})^T}=
\]
\[
=\widehat{(a_1^{y_1,e_1},a_2^{y_1,e_1},a_3^{y_1,e_1},\dots,a_n^{y_1,e_1})^T}
\]
for each pair, $x$, $y_1$ of elements in $\mu_{1,1}$. Hence, $a_k^{x,e_1}=a_k^{y_1,e_1}$, $k=1,2,\dots n$.

Similarly, from $\varphi_{e_n,x}(e_n)=\varphi_{e_n,y_2}(e_n)$ it follows that
\[
b^{e_n,x}_{n-1}=b^{e_n,y_2}_{n-1}, b^{e_n,x}_{n}=b^{e_n,y_2}_{n}.
\]

Therefore, if we take $y_1=e_n$, $y_2=e_1$, then
\[
\phi(x)=\varphi_{x,e_1}(x)=\varphi_{e_n,x}(x)=\varphi_{e_1,e_n}(x)
\]
for any $x\in \mu_{1,1}$, and the matrix of $\phi(x)$ does not depend on $x$.
Hence $\phi$ is a linear operator and the matrix of $\varphi_{e_1,e_n}$ is the matrix of $\phi$.
Thus, by Proposition \ref{3.2}, $\phi$ is an automorphism.
\end{proof}

The following theorem is proved similar to the proof of Theorem \ref{5.2} using theorems \ref{3.3}, \ref{3.4} and \ref{3.5}.

\begin{theorem} \label{5.3}
Each 2-local automorphism of the algebras $\mu_{1,2}$, $\mu_{1,3}$ and $\mu_{1,4}$ is an automorphism.
\end{theorem}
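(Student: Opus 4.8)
The plan is to repeat, for each of $\mu_{1,2}$, $\mu_{1,3}$ and $\mu_{1,4}$, the two--point scheme already used for $\mu_{1,1}$ in Theorem~\ref{5.2}. Let $\phi$ be a $2$--local automorphism of $\mu_{1,s}$ with $s\in\{2,3,4\}$, and for every pair $x,y$ fix an automorphism $\varphi_{x,y}$ with $\phi(x)=\varphi_{x,y}(x)$ and $\phi(y)=\varphi_{x,y}(y)$. By Theorems~\ref{3.3}, \ref{3.4} and~\ref{3.5}, each $\varphi_{x,y}$ is completely described by the scalars $a_1,\dots,a_n$ coming from $\varphi(e_1)$ (with the normalization $a_1=1$ in the case $\mu_{1,4}$) together with the single extra parameter $b_{n-1}$ appearing in $\varphi(e_n)$; every remaining matrix entry is a fixed polynomial in these.

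First I would evaluate each $\varphi_{x,e_1}$ at $e_1$. Since $\varphi_{x,e_1}(e_1)=\sum_{i=1}^{n}a_i^{x,e_1}e_i$ equals the single fixed vector $\phi(e_1)$, the coordinates $a_i^{x,e_1}$ are independent of $x$; write $\bar a_1,\dots,\bar a_n$ for their common values. Next I would evaluate each $\varphi_{e_n,x}$ at $e_n$. Comparing $\varphi_{e_n,x}(e_n)=\phi(e_n)$ with the form of $\varphi(e_n)$ in Theorems~\ref{3.3}--\ref{3.5}, the coefficient of $e_{n-1}$ is exactly $b_{n-1}$, whence $b_{n-1}^{e_n,x}$ is independent of $x$; call it $\bar b_{n-1}$. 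Unlike the case $\mu_{1,1}$, the coefficient of $e_n$ in $\varphi(e_n)$ (namely $\sqrt{a_1^{\,n-1}}$, $a_1^{\,n-2}$ or $1$, respectively) and, for $\mu_{1,2}$ and $\mu_{1,4}$, the coefficient of $e_{n-2}$ re--express $a_1$ and $a_n$, so the data read off at $e_n$ are automatically consistent with $\bar a$. Thus the pair $(e_1,e_n)$ singles out one automorphism $\psi:=\varphi_{e_1,e_n}$ with parameters $(\bar a_1,\dots,\bar a_n,\bar b_{n-1})$, which agrees with $\phi$ at both $e_1$ and $e_n$.

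It then remains to prove $\phi=\psi$ through the chain $\phi(x)=\varphi_{x,e_1}(x)=\varphi_{e_n,x}(x)=\psi(x)$, exactly as in Theorem~\ref{5.2}: the first equality is the definition of $\varphi_{x,e_1}$, the second holds because both sides equal $\phi(x)$, and the third is the substantive one. For fixed $x=\sum_i x_i e_i$ I would argue as follows. Because $\varphi_{x,e_1}$ carries all of $\bar a$, it coincides with $\psi$ on $e_1,\dots,e_{n-1}$, and since $b_{n-1}$ occurs only as the $e_{n-1}$--coefficient of $\varphi(e_n)$, the difference collapses to $\varphi_{x,e_1}(x)-\psi(x)=x_n\big(b_{n-1}^{x,e_1}-\bar b_{n-1}\big)e_{n-1}$, a single multiple of $e_{n-1}$. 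Dually, $\varphi_{e_n,x}$ carries $\bar b_{n-1}$ together with the $a$--entries visible in $\varphi(e_n)$, so it coincides with $\psi$ on $e_n$ and $\varphi_{e_n,x}(x)-\psi(x)$ is produced solely by the interior $a$--parameters; by the lower--triangular shape recorded in Theorems~\ref{1.2}--\ref{1.4} this difference is carried on the coordinates $e_2,\dots,e_{n-1}$ (the $e_n$--coordinate agreeing already because it is fixed by the evaluation at $e_1$). Equating the two expressions for the one vector $\phi(x)-\psi(x)$ and comparing coordinates from $e_1$ downward, the triangular cascade forces the interior parameters $a_2^{e_n,x},\dots,a_{n-2}^{e_n,x}$ to equal $\bar a_2,\dots,\bar a_{n-2}$, after which the $e_{n-1}$--coordinate is meant to pin the remaining scalars and yield $\phi(x)=\psi(x)$.

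The step I expect to be the main obstacle is precisely this last reconciliation when $x_1\neq0$ and $x_n\neq0$ simultaneously. Matching the $e_{n-1}$--coordinate only yields one relation, $x_n\big(b_{n-1}^{x,e_1}-\bar b_{n-1}\big)=x_1\big(a_{n-1}^{e_n,x}-\bar a_{n-1}\big)$, coupling the one parameter left free by the pair $(x,e_1)$ to the one left free by $(e_n,x)$; decoupling these so as to conclude that the common value is $0$ is the genuine difficulty, and it is here that one must exploit the rigidity of the triangular normal forms of Theorems~\ref{1.2}--\ref{1.4} (or, if necessary, a third well--chosen evaluation point) to force each free parameter individually to its value in $\psi$. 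The bookkeeping is heavier than in Theorem~\ref{5.2} because of the nonzero $e_{n-2}$--entry of $\varphi(e_n)$ in $\mu_{1,2}$ and $\mu_{1,4}$ and the constraint $a_1=1$ in $\mu_{1,4}$; these must be tracked carefully, but they do not alter the logic, and once the coupling above is resolved one obtains that $\phi$ coincides with the fixed automorphism $\psi$.
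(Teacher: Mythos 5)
You have followed the paper's own route---the paper proves this theorem only by the remark that it is ``proved similar to the proof of Theorem~\ref{5.2}''---and, to your credit, you have put your finger on exactly the point where that route fails: after the triangular cascade identifies $a_i^{e_n,x}$ with $\bar a_i$ for $i\le n-2$ and $i=n$, the $e_{n-1}$-coordinate yields only the single coupled relation $x_n\bigl(b_{n-1}^{x,e_1}-\bar b_{n-1}\bigr)=x_1\bigl(a_{n-1}^{e_n,x}-\bar a_{n-1}\bigr)$, and nothing in the two-point data forces the common value to vanish. This is a genuine gap, and it cannot be repaired by a third evaluation point or by further appeal to the normal forms. The reason is structural: in each of $\mu_{1,2}$, $\mu_{1,3}$, $\mu_{1,4}$ (and already in $\mu_{1,1}$) the parameters $a_{n-1}$ and $b_{n-1}$ each occur in exactly one entry of the automorphism matrix, namely in positions $(n-1,1)$ and $(n-1,n)$. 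Taking $a_1=1$, all other parameters zero, and perturbing only $a_{n-1}=s$ and $b_{n-1}=t$ gives, for every $s,t$, an automorphism $\varphi_{s,t}$ with $\varphi_{s,t}(x)=x+(sx_1+tx_n)e_{n-1}$; one checks directly from the multiplication tables that this map is multiplicative and bijective, and it is of the form required by Theorems~\ref{3.3}--\ref{3.5}.

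Consequently, for any function $g:\mathbb{F}^2\to\mathbb{F}$ satisfying $g(\lambda u,\lambda v)=\lambda g(u,v)$, the map $\phi(x)=x+g(x_1,x_n)e_{n-1}$ is a $2$-local automorphism: if $(x_1,x_n)$ and $(y_1,y_n)$ are linearly independent, solve the nonsingular $2\times 2$ system $sx_1+tx_n=g(x_1,x_n)$, $sy_1+ty_n=g(y_1,y_n)$ for $(s,t)$ and take $\varphi_{s,t}$; if they are proportional, the homogeneity of $g$ makes the system consistent. Choosing $g$ homogeneous of degree one but not additive---for instance $g(u,v)=uv/(u+v)$, extended by $0$ on the line $u+v=0$---produces a $2$-local automorphism that is not additive, hence not an automorphism. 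So the coupling you isolated is a real obstruction rather than a bookkeeping difficulty: the two-point scheme of Theorem~\ref{5.2} cannot establish the statement, your sketch does not constitute a proof, and the same objection applies to the paper's own argument, which passes from $\varphi_{e_n,x}(x)$ to $\varphi_{e_1,e_n}(x)$ without justification at precisely this step.
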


{\bf Conflicts of Interest:}  \ {\it On behalf of all authors, the corresponding author states that there is no conflict of interest.}


\begin{thebibliography}{10}

\bibitem{AK33}
F.N.Arzikulov, I.A.Karimjanov,{ A criterion of local derivations on the seven-dimensional simple malcev algebras},
{\it Operators and Matrices}, {\bf 16(2)}(2022), 495-511. https://doi.org/10.7153/oam-2022-16-38

\bibitem{AKU}
F.N.Arzikulov, I.A.Karimjanov, S.M.Umrzaqov, { Local and 2-local automorphisms of some solvable Leibniz algebras},
{\it Journal of Geometry and Physics}, {\bf 178}(2022), 104573. https://doi.org/10.1016/j.geomphys.2022.104573





\bibitem{AAl}
Sh.A.Ayupov, A.Elduque, K.K.Kudaybergenov, { Local derivations and automorphisms of Cayley algebras. },
{\it Journal of Pure and Applied Algebra}, {\bf 227(5)}(2023), 107277. DOI: 10.1016/j.jpaa.2022.107277



\bibitem{AK}
Sh.A.Ayupov, K.K.Kudaybergenov, { Local derivations on finite-dimensional Lie algebras},
{\it Linear Algebra and its Applications}, {\bf 493}(2016), 381-398. DOI: 10.1016/j.laa.2015.11.034

\bibitem{AKO}
Sh.A.Ayupov, K.K.Kudaybergenov, B.A.Omirov, { Local and 2-local derivations and automorphisms on simple Leibniz algebras},
{\it Bulletin of the Malaysian Mathematical Sciences Society}, {\bf 43}(2020), 2199-2234. https://doi.org/10.1007/s40840-019-00799-5

\bibitem{AKConfe}
Sh.A.Ayupov, K.K.Kudaybergenov, { Local Automorphisms on Finite-Dimensional Lie and Leibniz Algebras, {\it Algebra,
Complex Analysis and Pluripotential Theory, USUZCAMP 2017. Springer Proceedings in Mathematics and Statistics},
{\bf 264}(2017), 31-44. DOI: 10.1007/978-3-030-01144-4$_{-}$3


\bibitem{Ayupov6}
Sh.A.Ayupov, A.K.Khudoyberdiyev, { Local derivations on Solvable Lie algebras},
{\it Linear and Multilinear Algebra}, {\bf 69}(2021), 1286-1301.
 https://doi.org/10.1080/03081087.2019.1626336

\bibitem{AyuKudYus}
Sh.A.Ayupov,  A.K.Khudoyberdiyev, B.B.Yusupov, { Local and 2-local derivations of Solvable Leibniz algebras},
{\it International Journal of Algebra and Computation}, {\bf 30(6)}(2020), 1185-1197.
https://doi.org/10.1142/S021819672050037X

\bibitem{BEST}
T.Becker, J.Escobar, C.Salas, R.Turdibaev, { On Local Automorphisms of $sl_2$},
{\it Uzbek Mathematical Journal}, {\bf 2}(2019), 27-34. DOI: 10.29229/uzmj.2019-2-3

\bibitem{ChWD}
H.Chen, Y.Wang, { Local superderivation on Lie superalgebra $q(n)$}
{\it Czechoslovak Mathematical Journal}, {\bf 68(143)}(2018), 661-675. DOI: 10.21136/CMJ.2018.0597-16


\bibitem{Costantini}
M.Costantini, { Local automorphisms of finite dimensional simple Lie algebras},
{\it Linear Algebra Appl.}, {\bf 562}(2019), 123-134. https://doi.org/10.1016/j.laa.2018.10.009

\bibitem{FKK}
B.Ferreira, I.Kaygorodov, Kudaybergenov K, {Local and 2-local derivations of simple n-ary algebras},
{\it  Ricerche di
Matematica}, (2021),  https://doi.org/10.1007/s11587-021-00602-3



\bibitem{AMG}
A.Gleason, { A characterization of maximal ideals},
{\it Journal d’Analyse Mathématique}, {\bf 19}(1967), 171-172.

\bibitem{Jon}
B.Johnson, { Local derivations on C*-algebras are derivations},
{\it Transactions of the American Mathematical Society}, {\bf 353}(2001), 313-325. DOI: 10.2307/221975

\bibitem{Kad}
R.Kadison, { Local derivations},
{\it Journal of Algebra}, {\bf 130}(1990), 494-509. https://doi.org/10.1016/0021-8693(90)90095-6


\bibitem{JPK_WZ}
J.Kahane, W.\.{Z}elazko, { A characterization of maximal ideals in commutative Banach algebras},
{\it Studia Mathematica}, {\bf 29}(1968), 339-343.

\bibitem{KL}
I,A.Karimjanov, M.Ladra, { Some classes of nilpotent associative algebras},
{\it Mediterranean Journal of Mathematics}, {\bf 17(2)}(2020)}, 1-21.
https://doi.org/10.1007/s00009-020-1504-x


\bibitem{KUY}
I.A.Karimjanov, S.M.Umrzaqov, B.B.Yusupov, { Local and 2-local automorphisms of solvable Leibniz algebras with abelian and model nilradicals},
{\it Preprint}, (2022).

\bibitem{KKY} I. Kaygorodov, K. Kudaybergenov,  I. Yuldashev, {Local derivations of semisimple Leibniz algebras.} {\it Communications in Mathematics} {\bf 30(2)} (2022), 1-12. https://doi.org/10.46298/cm.9274

\bibitem{MasOmi}	
K.K.Masutova, B.A.Omirov, { On some zero-filiform algebras},
{\it Ukrainian Mathematical Journal}, {\bf 66(4)}(2014), 541-492.
https://doi.org/10.1007/s11253-014-0951-6

\bibitem{S}
P.\v{S}emrl, { Local automorphisms and derivations on $B(H)$},
{\it Proceedings of the American Mathematical Society}, {\bf 125}(1997), 2677-2680. https://doi.org/10.2307/2162040





\bibitem{WCN1}
Y.Wang, H.Chen,  J.Nan, { 2-Local superderivations on basic classical Lie
superalgebras}, {\it Journal of Mathematical Research with Applications}, {\bf 37}(2017), 527-534.
https://doi.org/10.1142/S100538671700044X

\bibitem{WCN2}
Y.Wang, H.Chen, J.Nan, { 2-Local automorphisms on basic classical Lie
superalgebras}, {\it Acta Mathematica Sinica, English Series}, {\bf 35(3)}(2019), 427-437.
https://doi.org/10.1007/s10114-018-7519-6


\bibitem{Yus}
B.B.Yusupov, { Local and 2-local derivations of some solvable Leibniz algebras},
{\it Uzbek Mathematical Journal}, {\bf 2}(2019), 154-166. DOI: 10.29229/uzmj.2019-2-17






\end{thebibliography}
\end{document}